\newtheorem{thm}{Theorem}
\newtheorem{lem}[thm]{Lemma}
\newtheorem{prop}[thm]{Proposition}
\newtheorem{theor}[thm]{Theorem}
\newcommand{\norm}[1]{\left\lVert #1 \right\rVert}
\def\R{\mathbb R} % real numbers
\def\dt{\mathrm{d}}
\def\G{\mathcal G}
\def\F{\mathcal F}
\def\min{\operatorname{min}}
\def\max{\operatorname{max}}
\def\Id{\mathbf{Id}}
\def\loc{{\mathop\mathrm{\,loc\,}}}
\numberwithin{equation}{section}
\title{
\sc{ Very degenerate elliptic
equations under almost critical Sobolev regularity}}
\author{ Albert Clop, Raffaella Giova, Farhad Hatami, Antonia Passarelli di Napoli}
\begin{document}
\maketitle

\begin{abstract}
{We prove the local Lipschitz continuity and the higher differentiability of local minimizers of functionals of the form
$$\mathbb{F}(u,\Omega)=\int_{\Omega}\big( \F(x,Du)+f(x)\cdot
u(x)\Big)\,dx$$
with {non autonomous} integrand $\F(x,\xi)$ which is {degenerate} convex with respect to the gradient variable. The main novelty here is that the results are obtained assuming that the partial map $x\mapsto D_\xi\F(x,\xi)$ has weak derivative in the almost critical Zygmund class $L^n\log^\alpha L$ and the datum $f$ is assumed to belong to the same Zygmund class. }
\end{abstract}

\bigskip

{\footnotesize{ \emph{Mathematics Subject
Classification}. {35J47; 35J70; 49N60.}

{\it Key words and phrases}. {Degenerate convex functionals;
Lipschitz regularity; Higher differentiability. }}}

\bigskip 
\section{Introduction}
In this paper we deal with the
regularity properties of local minimizers of {non autonomous and} non homogeneous functionals of the form
\begin{equation}\label{modenergy}
\mathbb{F}(u,\Omega)=\int_{\Omega}\big( \F(x,Du)+f(x)\cdot
u(x)\Big)\,dx,
\end{equation}
where  $\Omega\subset\R^n$, $n\ge 2$ is a bounded open set,
$u:\Omega\to \R^N$, $N\geq 1$ , $\F: \Omega\times \R^{n\times
N}\to [0,+\infty)$ is a Carath\'eodory function with growth $p\geq 2$, {assumed to be  uniformly convex with respect to the gradient variable   \emph{only} at $\infty$}, and $f:\Omega\subset\R^n\to\R^N$ is a  given
datum. To be more precise, we shall assume that $\xi\mapsto \F(x,\xi) $ is convex and, {for an exponent $p\ge 2$}, satisfies the following {set of hypotheses.}
\begin{enumerate}
\item[{\bf{(F0)}}] There exist positive constants $\ell, L$ such that
$$\ell (|\xi|^p-1)\le \F(x,\xi)\le L(|\xi|^p+1)$$
for a.e. $x\in\Omega$ and every $\xi\in\R^{n\times N}$.
\end{enumerate}
{This is the unique assumption that has to hold true on the whole $\R^{n\times N}$. From now on we list the assumptions that need to hold only at $\infty$, i.e. outside a ball of radius $\mathcal{R}$. }
\begin{enumerate}
\item[{\bf{(F1)}}] There exists
$F:\overline\Omega\times[\mathcal{R},\infty)\to [0,+\infty)$ such that
$$\F(x,\xi)=F(x,|\xi|)$$
for a.e. $x\in\Omega$ and for every $\xi\in\R^{n\times N}\setminus
B_{\mathcal{R}}(0)$.
\item[{\bf{(F2)}}] {The partial map $\xi\mapsto \F(x,\xi) $ is $C^2(\R^{n\times N}\setminus
B_{\mathcal{R}}(0))$ and }there exists $\nu>0$ such that
$$\langle D_{\xi\xi}\F(x,\xi)\lambda,\lambda\rangle\geqslant\nu|\xi|^{p-2}|\lambda|^{2},$$
 for a.e. $x\in\Omega$, every
$\lambda\in \R^{n\times N}$ and every $\xi\in\R^{n\times N}\setminus
B_{\mathcal{R}}(0)$.
\item[{\bf{(F3)}}] There exists a positive constant $L_{1}$ such that
$$|D_{\xi\xi}\F(x,\xi)|\leqslant L_{1}|\xi|^{p-2}$$
for a.e. $x\in\Omega$  and  every $\xi\in\R^{n\times N}\setminus
B_{\mathcal{R}}(0)$.
\item[{\bf{(F4)}}] There exist and exponent $\alpha>0$ and a non-negative function $k\in L^n\log^\alpha L_{\loc}(\Omega)$  such that
$$|D_{x\xi}\F(x,\xi)|\leqslant k(x)|\xi|^{p-1},$$
for a.e. $ x\in\Omega$ and for every $\xi\in\R^{n\times N}\setminus
B_{\mathcal{R}}(0)$.
\end{enumerate}

\noindent There is no loss of generality in assuming that $\mathcal{R}=1$, what we will do in what follows.

\noindent The functionals with degenerate convexity have
attracted a wide interest in the last few years. Among the reasons, they arise in
problems of optimal transport with congestion effects and in the
construction of Wardrop equilibriums in traffic problems (see for instance Carlier, Jim\'enez and Santambrogio \cite{CJS}). In
particular, the regularity of minimizers, and more specifically their
Lipschitz continuity, allows the application of the Di Perna - Lions
theory to describe the equilibrium configurations. This
connection  was first described by Brasco, Carlier and Santambrogio
\cite{BCS}, and has partially motivated a research line which is nowadays very active. 
\\
In this setting, the local {Lipschitz regularity} of the minimizers  was proven in \cite[Theorem
5.2]{BCS} for $f\in C^\alpha$ and $\alpha>0$ , and in {\cite[Theorem
2.1]{B}} for $f\in L^s$, $s>n$ { in case the energy density has the following special structure
$$ \F(x,\xi)=(|\xi|-1)_+^p, \qquad\qquad p\ge 2.$$} 
\\
It is worth mentioning that the regularity of minimizers of non autonomous widely degenerate functionals is challenging also from the theoretical point of view and many contributions to its study  are available. 
\\
Actually, they fit into the broader contest of asymptotically convex functionals, whose study started with the pioneering paper by Chipot and Evans (\cite{CE}) concerning the {homogeneous}, autonomous, quadratic growth case. Later on, still for the {homogeneous}, autonomous case, the Lipschitz continuity of local minimizers of asymptotically convex functionals has been established for the superquadratic growth  (\cite{giamod86}), in the subquadratic growth case (\cite{LPV}).
Since then, many contributions to the regularity theory of asymptotically convex functionals have been established. 
Among the others we quote the results in \cite{CupGuiMas, fossnapoliverde1, fossnapoliverde2, fossgoodrich1, fossgoodrich2, goodrich4, goodrich1, goodrich2, Kronz,  R, scheven1, scheven2}.
\\
We'd like to recall that, in the non autonomous
homogeneous case, i.e. $f\equiv 0$, Fonseca, Fusco and Marcellini
proved in \cite{FFM} that local minimizers of $\mathbb{F}(u,\Omega)$  are Lipschitz continuous
if
$$|D_{x\xi}\F(x,\xi)|\leq k(x)\,(1+|\xi|)^{p-1}\hspace{1cm}(x,\xi)\in\Omega\times \R^{n\times N}$$
for some function $k\in L^\infty$, i.e. under a Lipschitz regularity of the partial map $x\mapsto D_{\xi}\F(x,\xi)$. More recently, still for $f\equiv
0$, in \cite{EMM} this result was extended to the case $k\in
L^s_{\loc}(\Omega)$ for some $s>n$ {(see also \cite{EMM16, EMM18})}.
\\
Especially for applications, it is desirable to understand what is
the situation under  assumptions on  the map $x\mapsto D_{\xi}\F(x,\xi)$ weaker than the H\"older  continuity.
 We know,
though, that if $s=n=2$ then $\nabla u\in L^\infty$ may fail even
in the linear uniformly elliptic setting \cite{CFMOZ,APdN2}, while
in the case $n>2$ some regularity results under a Sobolev or
Orlicz Sobolev assumption on the coefficients, that doesn't imply
their H\"older continuity, can be found in \cite{goodrich3, Gio, Giova2,
GP2016, APdN1, APdN-Levico}.
\\
The case  $k\in
L^n_{\loc}(\Omega)$ has been studied in \cite{CGGP} for homogeneous functionals with degenerate convexity, but obtaining
only $W^{1,t}$ regularity for every $t\in[p,\infty)$.
\\
%Also,
%non-autonomous extensions to the more recent
%orthotropic formulation of the traffic problem (see for instance \cite{BBLV, BC, BLPV} ) will be of interest.
{As far as we know, no Lipschitz regularity results are available for local minimizers of this class of functionals  under weaker assumptions on the coefficients and the datum and this motivated our study. }

\noindent Here we will show that {the gradient of the local minimizers of $\mathbb{F}(u,\Omega)$ is locally bounded in $\Omega$} under a regularity assumption on the map $x
\rightarrow D_{\xi}f(x,\xi)$ and the  datum $f$
 that, roughly speaking, will be intermediate between  $L^n$ and $L^{ s}$, with ${s}>n$. \\
 Actually,  we are
able to prove that {in order to obtain the local Lipschitz regularity of the  minimizers it suffices to} assume that both the weak derivative with respect to
$x$ of $D_{\xi}f(x,\xi)$ and the datum $f$ belong to the Orlicz
Zygmund class $L^n \log^{\alpha} L$, with $\alpha>0$ sufficiently
large with respect to the dimension $n$. More precisely, our
main result is the following

\begin{thm}\label{thmain1}
Let $u\in W^{1,p}_{\loc}(\Omega)$ be a local minimizer of the
functional $\mathbb{F}(u,\Omega)$ in \eqref{modenergy}, and assume
that the energy density $\F(x,\xi)$ satisfies assumptions
{\bf{(F0)--(F4)}}. If $f\in L^n\log^\alpha L_{\loc}(\Omega)$, with $\alpha>3n$ and where
$\alpha$ is the exponent appearing in {\bf{(F4)}}, then $Du\in L^\infty_{\mathrm{loc}}(\Omega)$ and the following estimate
\begin{align*}
\sup\limits_{B_{\rho}}|Du| \leqslant
\hat{C}{\left(\int_{B_r}(1+|Du|)^p\,dx,\right)^\frac1p}
\end{align*}
holds for every balls $B_{\rho}\subset B_r\Subset\Omega$ and for a
constant  $\hat{C}=\hat{C}(n,p,\alpha,\nu,L_1,\rho, r,
||k||_{L^n\log^\alpha L(B_r)},||f||_{L^n\log^\alpha L(B_r)})$.
\end{thm}
Note that, {by virtue of the embedding Theorems in the Orlicz-Sobolev setting (\cite{Cianchi}),} our assumption on the partial map $x\mapsto D_\xi\F(x,\xi)$ implies the continuity of the coefficients but not their H\"older continuity.

\noindent
As usual, the proof of this Theorem is divided in two parts: an a priori $L^\infty$ estimate, and an approximation argument.
\\
{The a priori estimate is achieved by the use of test functions that vanish on the set where the functional in not uniformly convex.
The  novelty here is the use of a suitable Young's inequality in Orlicz space and an appropriate modification of the Moser iteration argument.
\\
The size of the exponent $\alpha$ in assumption {\bf{(F4)}} is used in order to let this iteration argument work.
\\
Once the a priori estimate is established, we perform an approximation procedure with a sequence of functionals regular enough to apply the a priori estimate to their minimizers.
\\
The lack of the uniqueness of the minimizers, due to the lack of uniform convexity of the integrand, could effect the approximation argument. We overcome this difficulty, as done in \cite{CCG},  by adding to the approximating functionals a penalization term that forces the approximating sequence to converge to an arbitrarily fixed local minimizers.}
\\
{It is worth mentioning that in order to start the Moser iteration procedure, we need to estimate quantities involving the second derivatives of the local minimizers. Therefore,
 another result that we are able to establish here consists in showing } that minimizers turn out to gain one degree of differentiability, in the weak sense, away from the degeneracy set. This can be quantitatively stated as follows. Denoting by
\begin{equation}\label{funzioneg}
\mathcal{G}(t):=1+\int_0^t(1+s)^{\frac{p-4}{2}}s\,ds,	
\end{equation}
{we have the following higher differentiability result}.

{\begin{thm}\label{thmain2}
Let $u\in W^{1,p}(\Omega; \mathbb{R}^{n\times N})$ be a  local
minimizer of the functional $\mathbb{F}(\cdot,\Omega)$ in
\eqref{modenergy}, and assume that the energy density  $\F(x,\xi)$ satisfies assumptions  {\bf{(F0)-(F4)}}. If $f\in  L^{n}\log^\alpha L_{\loc}(\Omega)$ with $\alpha>3n$, then
$$
\mathcal{G}((|Du|-1)_+)\in W^{1,2}_{\mathrm{loc}}(\Omega)
$$
and the following Caccioppoli type inequality holds,
\begin{equation}\label{apri}
\int_{B_\rho(x_0)}|D(\mathcal{G}((|Du|-1)_+))|^2 \le
\widetilde{C}
     \int_{B_{r}(x_0)}\left(1+|Du|^p\right)\,dx,
  \end{equation}
for every balls $B_\rho\subset B_{r}(x_{0})\Subset\Omega$, for
some $\widetilde{C}=\widetilde{C}(n,p,\alpha,\nu,L_1,\rho, r,
||k||_{L^n\log^\alpha L(B_r)},||f||_{L^n\log^\alpha L(B_r)})$.
\end{thm}}
The proof of Theorem \ref{thmain2} is established  as before  combining an  a priori estimate with an approximation argument. 
\\
This time the regularity of the approximating minimizers  is transferred to the limit by the use of a measure theory result ( see Proposition \ref{anto} below) that is an improved version of Proposition 6 in \cite{CGGP2}.

\noindent
{{Let us mention that in   equation \eqref{apri}, the term on the left hand side is equivalent to
$$
\int_{B_\rho(x_0)}|D(\mathcal{G}((|Du|-1)_+))|^2=\int_{B_\rho(x_0)}
(|Du|-1)_+^2\,|Du|^{p-4}\,|D^2u|^2\,dx
$$
so that the above result is, in fact, a weighted  bound for $D^2u$ with the weight $(|Du|-1)_+^2\,|Du|^{p-4}$.}}
\\
\\
The paper is structured as follows. {In Section \ref{preliminaryresults} we recall some preliminaries. In Section  \ref{slipschitz} we prove {both} the interior a priori Lipschitz  {and the a priori second order Sobolev } estimates. In Section \ref{appsubsection} we prove {the Lipschitz regularity of the minimizers of $\mathbb{F}(u,\Omega)$, i.e.} Theorem \ref{thmain1}. In Section \ref{ssobolev}  we prove the interior second order Sobolev {regularity of the minimizers of $\mathbb{F}(u,\Omega)$}, i.e. Theorem \ref{thmain2}. }

\section{Preliminary results}\label{preliminaryresults}

We will  follow the usual convention and denote by $c$ or $C$ a
general constant that may vary on different occasions, even within
the same line of estimates. Relevant dependencies on parameters and
special constants will be suitably emphasized using parentheses or
subscripts. All the norms we use  will be the standard Euclidean
ones and denoted by $|\cdot |$ in all cases. In particular, for
matrices $\xi$, $\eta \in \R^{n\times m}$ we write $\langle \xi,
\eta \rangle : = \text{trace} (\xi^T \eta)$ for the usual inner
product of $\xi$ and $\eta$, and $| \xi | : = \langle \xi,
\xi\rangle^{\frac{1}{2}}$ for the corresponding euclidean norm. By
$B_r(x)$  we will denote the ball in $\mathbb{R}^n$ centered at $x$
of radius $r$. The integral mean of a function  $u$ over a ball
$B_r(x)$  will be denoted by $u_{x,r}$, that is
$$ u_{x,r}:=\frac{1}{|B_r(x)|}\int_{B_r(x)}u(y)\,dy,$$
where $|B_r(x)|$ is the Lebesgue measure of the ball in
$\mathbb{R}^{n}$. If no confusion  arises, we shall omit the
dependence on the center.

The following lemma is an important application in the so called
hole-filling method. Its proof can be found for example in
\cite[Lemma 6.1]{gi} .
\medskip
\begin{lem}\label{iter} Let $h:[r, R_{0}]\to \mathbb{R}$ be a nonnegative bounded function and $0<\vartheta<1$,
$A, B\ge 0$ and $\beta>0$. Assume that
$$
h(s)\leq \vartheta h(t)+\frac{A}{(t-s)^{\beta}}+B,
$$
for all $r\leq s<t\leq R_{0}$. Then
$$
h(r)\leq \frac{c A}{(R_{0}-r)^{\beta}}+cB ,
$$
where $c=c(\vartheta, \beta)>0$.
\end{lem}
In what follows, for each $\gamma\ge
0$, we will denote
\begin{equation}\label{phigamma}
\Phi(t)=\Phi_\gamma(t)=\frac{t^{2}}{(1+t)^{2}}(1+t)^{\gamma}.
\end{equation}
For such $\Phi$, one can easily check that
\begin{equation}\label{eq48}
\aligned
t\Phi^{\prime}(t)%&=t^{2}(1+t)^{\gamma-3}(\gamma t+2)=\dfrac{\gamma t+2}{t+1}\Phi(t) \\
%&\leqslant
%\dfrac{2\gamma t+2+2\gamma+2t}{t+1}\Phi(t)=\dfrac{(t+1)2(1+\gamma)}{t+1}\Phi(t) \\
&\leq 2(1+\gamma)\Phi(t),
\endaligned
\end{equation}
{and also that, since $$\Phi'(t)\le 2(1+\gamma)t(1+t^2)^{\gamma-2},$$
we have
\begin{equation}\label{phiprimo}
	\Phi'(t)\le 2^{\gamma+1}(1+\gamma)\qquad\qquad \text{for every}\,\, t\in (0,1).
\end{equation}}
We  introduce the following notation for the positive part of
$|Du|-1$,
$$P=(|Du|-1)_+=\max\{|Du|-1,0\}$$
so that
\begin{equation}\label{equality}
\aligned DP&=\chi_{\{|Du|>1\}}\cdot\dfrac{Du}{|Du|}\cdot
D^{2}u.\endaligned
\end{equation}
\subsection{Orlicz Spaces}\label{orl sec}
In this section we recall some basic properties of Orlicz spaces
(for more details we refer to  \cite{Adams}).

\medskip

Let $\Psi: [0,\infty) \rightarrow [0,\infty)$ be a Young function,
that is $\Psi(0)=0$, $\Psi$ is increasing and convex. If $\Omega$ is
a open subset of $\R^n$, we define  the Orlicz space
$L^\Psi(\Omega)$ generated by the Young function $\Psi$ as the set
of the measurable functions $u:\Omega \rightarrow \mathbb R$ such
that
\[
 \int_\Omega \Psi
\left( \frac {|u|} {\lambda} \right)\,dx< \infty,
\]
for some $\lambda>0$. Once equipped with the Luxemburg norm
\[\|u\|_{L^\Psi (\Omega)}=\inf\left\{\lambda>0\ :\ \int_\Omega \Psi
\left( \frac {|u|} {\lambda} \right) \,dx\leqslant 1\right\}\]
$L^\Psi(\Omega)$ is a Banach space. We
define the space $WL^\Psi(\Omega)$ as the set
\begin{equation*}%\label{Orl Sob type}
WL^\Psi(\Omega)= \left\{ u \in W^{1,1} (\Omega): \, |\nabla u| \in
L^\Psi   (\Omega) \right\}.
\end{equation*}
The Zygmund space $L^p\log^\alpha L (\Omega)$, for $1\leqslant
p<\infty$, $\alpha\in\R$ ($\alpha\geqslant 0$ for $p=1$), is defined
as the Orlicz space $L^\Psi(\Omega)$ generated by the Young function

\begin{equation}\label{log}
\Psi(t) \simeq t^p \log^\alpha(e+t)\, \qquad
\quad \text{for every $t\ge t_0 \ge 0$} \,.
\end{equation}
Therefore,   a measurable function $u$ on $\Omega$ belongs to  $  L
^p\log^\alpha  L (\Omega)$ if
\[\int_\Omega |u|^p\log^\alpha(e+|u|)\,dx<\infty\,,\]
and we {recall that the quantity} 
\[
[u]_{p,\alpha}=\biggm[ \int
|u|^p\log^\alpha\left(e+\frac{|u|}{\|u\|_p}\right)\,dx\biggm]^{\frac{1}{p}}
\]
 is equivalent to the Luxemburg norm for $\alpha\ge 0$.
{For further needs, we record that the function $\Psi(t)=t^p \log^\alpha(e+t)$, $p>1$, $\alpha\in \mathbb{R}$, satisfy the so called $\Delta_2$ and $\nabla_2$ conditions. This is equivalent   to the fact that 
$$\frac{\Psi(t)}{t^{p-\varepsilon}}\nearrow\qquad\qquad \frac{\Psi(t)}{t^{p+\varepsilon}}\searrow\qquad\qquad \text{for every}\,\, \varepsilon>0,$$
and so, it  can be easily checked that
$$\Psi(\lambda t)\ge \lambda^{p+\varepsilon}\Psi(t)\qquad\qquad \text{for every }\lambda\in (0,1)$$
and
$$\Psi(\mu t)\ge \mu^{p-\varepsilon}\Psi(t)\qquad\qquad \text{for every }\mu>1$$
From this and the definition of Luxenburg norm, we deduce that
\begin{equation*}
	1=\int_\Omega \Psi\left(\frac{|f|}{||f||_{\Psi}}\right)\,dx\ge \left(\frac{1}{||f||_{\Psi}}\right)^{p+\varepsilon}\int_\Omega \Psi(|f|)\qquad \text{if}\,\, ||f||_{\Psi}\ge 1
\end{equation*}
and 
\begin{equation*}
	1=\int_\Omega \Psi\left(\frac{|f|}{||f||_{\Psi}}\right)\,dx\ge \left(\frac{1}{||f||_{\Psi}}\right)^{p-\varepsilon}\int_\Omega \Psi(|f|)\qquad \text{if}\,\, ||f||_{\Psi}\le 1
\end{equation*}
We deduce that
\begin{equation}\label{normint}
 \int_\Omega |f|^p\log^\alpha(e+|f|)\,dx	\le ||f||_{\Psi}^{\vartheta}
\end{equation}}
for some $\vartheta=\vartheta(p)>0$.
% \noindent We will need to
%use the following H\"older type inequality for Zygmund spaces
%\begin{equation}\label{201006101}
%\|f\,g\|_{L^c\log^{\gamma_1} L}\leqslant C(\alpha_1,\,\beta_1)
%\,\|f\|_{L^{a}\log^{\alpha_1}L}\, \|g\|_{L^{b}\log^{\beta_1} L},
%\end{equation}
%where $a,\,b\,>1$, $\alpha_1,\,\beta_1\,\in\R$, $f\in
%L^{a}\log^{\alpha_1}L$ and $g\in L^{b}\log^{\beta_1}L$, satisfy the
%following relations
%\[\frac{1}{c}=\frac1{a}+\frac1{b}\,,\qquad \frac{\gamma_1}{c}=\frac{\alpha_1}{a}+\frac{\beta_1}{b}.\]
%The following inclusions hold
%\[
%L^p \log ^\beta L (\Omega) \subset L^p    (\Omega) \subset L^p \log
%^\alpha L (\Omega)
%\]
%with continuous embeddings if $\alpha < 0 < \beta$.

%Accordingly, we define the Orlicz Sobolev space $WL^p\log^\alpha L  (\Omega)$ as
%the set
%\begin{equation*}%\label{grad Zyg}
%%
%WL^p\log^\alpha   L(\Omega) = \left\{ u \in W^{1,1} (\Omega): \,
%|\nabla u| \in L^p\log^\alpha L       (\Omega) \right\}.
%\end{equation*}

\section{The a priori $L^\infty$ estimate }\label{slipschitz}

%In this section we prove the local Lipschirz continuity of local minimizers of functional \eqref{modenergy}. As usual, the proof is achieved through an a priori estimate and an approximation argument , that are contained in the following two subsections.

\noindent {This section is devoted to the proof of the a priori estimates that  will be the crucial steps
in the proofs of both Theorems \ref{thmain1} and \ref{thmain2} }. The precise
statement is the following one.

\begin{theor}\label{t:Apriori2}
 Assume {\bf{(F0)}}--{\bf{(F4)}} hold, and let $f\in L^n\log^\alpha L_{\loc}(\Omega)$ where $\alpha>3n$ is the exponent appearing in {\bf{(F4)}}.
Fix  a ball $B_{r}(x_0)\Subset \Omega$, and functions $u,
\bar{u}\in W^{1,p}(B_{r}(x_0);\mathbb{R}^N)$, and define
\[{\widetilde{\mathbb{F}}}(v;B_r(x_0)):=\mathbb{F}(v;B_{r}(x_0))+\int_{B_{r}(x_0)}\arctan(|v-\bar{u}(x)|^2)\,dx,\]
where $\mathbb{F}$ is defined in \eqref{modenergy}.
Let $v\in u+W_0^{1,p}(B_{r}(x_0);\mathbb{R}^N)$ be a  minimizer of
${\widetilde{\mathbb{F}}}$, {satisfying
\[v\in W^{2,2}_{\mathrm{loc}}(B_{r}(x_0);\mathbb{R}^N)\cap W^{1,\infty}_{\mathrm{loc}}(B_{r}(x_0);\mathbb{R}^N)\quad \text{ and}\quad
 |Dv|^{{p-2}}|D^2v|^2\in
L^1_{\mathrm{loc}}(B_{r}(x_0)).\]} Then, for every
$B_{\bar{r}}(\bar{x})\Subset B_r(x_0)$,  every $0<\rho<r'\le
\bar{r}$ 
\begin{equation}\label{apr}
\sup\limits_{B_{\rho}(\bar{x})}|{Du}| \leqslant C
\bigg[\int_{B_{r'}(\bar{x})}(1+|Du|)^p\dt x\bigg]^{\frac{1}{p}},	
\end{equation} for
some  constant $C=C(n,N,p,L_1,\nu,\rho, r',||k||_{L^n\log^\alpha
L(B_{r'})},||f||_{L^n\log^\alpha L(B_{r'})})$. Moreover, one has
\begin{equation}\label{apriori3}
\int_{B_{\rho}(\bar{x})}
\frac{(|Du|-1)_+^2}{(1+(|Du|-1)_+)^2}\,|Du|^{p-2}|D^2u|^2\,dx\leq C
\int_{B_{r'}(\bar{x})}\left(1+|Du|^p\right)\,dx,\end{equation} for some
$C=C(n,N,p,L_1,\nu,\rho,r',||k||_{L^n\log^\alpha
L(B_{r'})},||f||_{L^n\log^\alpha L(B_{r'})})$.
 \end{theor}

\noindent  For the
proof of the above result, the integral  $
\int_{B_{r}(x_0)}\arctan(|v-\bar{u}|^2)\,dx$  is a perturbation of
$\mathbb{F}(v;B_{r}(x_0))$ that provides no difficulties. Indeed,
denoting $g(x,v):=\arctan(|v-\bar{u}(x)|^2)$, we have that $g$ and
its derivatives $g_{v^{\alpha}}$, $\alpha=1,\ldots,m$,  are bounded.
Thus, for the sake of clarity, we prefer to drop this perturbation
term, and to state, and prove an a priori estimate for local
minimizers of $\mathbb{F}(\cdot;\Omega)$ only, as done in \cite{CGGP2}, see Theorem
\ref{Apriori} below.

\begin{thm}\label{Apriori}
Let  $ \mathcal{F}(x,\xi)$ satisfy assumptions
{\bf{(F0)}}--{\bf{(F4)}}, and let $f\in L^n\log^\alpha
L_{\loc}(\Omega)$ where $\alpha>3n$ is the exponent appearing in {\bf{(F4)}}.  Assume that $u\in
W^{2,2}_{\loc}(\Omega,\R^{N})\cap
W^{1,\infty}_{\loc}(\Omega,\R^{N})$ is a local minimizer of the
functional $\mathbb{F}(\cdot;\Omega)$, and that
$|Du|^{p-2}|D^2u|^2\in L^1_{\mathrm{loc}}(\Omega)$. Then the
estimate
\begin{equation}\label{apriorisup}
\sup_{B_{\rho}}|{Du}| \leqslant \hat{C}\bigg[\int_{B_R}
(1+|Du|)^p\dt x\bigg]^{\frac{1}{p}},
\end{equation}
holds  for every concentric balls $B_\rho \subset B_R\Subset
\Omega$. Moreover, the following second order Caccioppoli type inequality
\begin{equation}\label{apriori3b}
\int_{B_{\rho}}
\frac{(|Du|-1)_+^2}{(1+(|Du|-1)_+)^{2}}\,|Du|^{p-2}|D^2u|^2\,dx
   \leq \hat{C}
     \int_{B_{2R}}\left(1+|Du|^p\right)\,\,dx,
  \end{equation}
holds for some
$\hat{C}=\hat{C}(n,N,p,L_1,\nu,\rho,R,||k||_{L^n\log^\alpha
L(B_{2R})},||f||_{L^n\log^\alpha L(B_{2R})})$ and for every
concentric balls $B_\rho \subset B_R\subset B_{2R}\Subset \Omega$.
\end{thm}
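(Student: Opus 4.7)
The plan is to derive, for every $\gamma \geq 0$, a weighted Caccioppoli-type inequality involving $\Phi_\gamma(P)$ and the second derivatives of $u$, and then run a Moser iteration on the parameter $\gamma$. The inequality \eqref{apriori3b} will correspond to the case $\gamma = 0$, while the sup estimate \eqref{apriorisup} will follow by iterating $\gamma \to \infty$. The a priori regularity $u \in W^{2,2}_{\mathrm{loc}} \cap W^{1,\infty}_{\mathrm{loc}}$ together with $|Du|^{p-2}|D^2u|^2 \in L^1_{\mathrm{loc}}$ justifies the differentiation of the Euler-Lagrange system. Crucially, the weight $\Phi'_\gamma(P)$ vanishes on the degeneracy set $\{|Du| \leq 1\}$, so that the ellipticity bounds \textbf{(F2)}--\textbf{(F3)} can be used freely wherever the weight is nonzero.

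First I would work with the formally differentiated Euler-Lagrange system: for every admissible vector test function $\psi$ and every $s=1,\dots,n$,
\[
\int \langle D_{\xi\xi}\F(x,Du)\,\partial_s Du,\, D\psi\rangle\,dx = -\int \langle D_{x_s\xi}\F(x,Du),\, D\psi\rangle\,dx + \int f \cdot \partial_s\psi\,dx.
\]
Choosing $\psi = \eta^2\,\Phi'_\gamma(P)\,\partial_s u$ with $\eta \in C_c^\infty(B_r)$, summing over $s$, and expanding the derivative of $\psi$, the principal piece coming from $\Phi'_\gamma(P)\,\partial_s Du$ yields, via \textbf{(F2)} and \eqref{equality}, the coercive lower bound
\[
\nu \int \eta^2\, \Phi'_\gamma(P)\, |Du|^{p-2}\,|D^2u|^2\,dx,
\]
while the remaining terms are controlled as follows: the $D\eta$-factors produce lower-order contributions to be absorbed via the hole-filling Lemma \ref{iter}; the term with $D_{x_s\xi}\F$ is handled by \textbf{(F4)} and Young's inequality, producing a factor $k(x)^2$; and the $f$-term is bounded by $\|f\|_{L^s}\,\|D\psi\|_{L^{s'}}$ with $s' = s/(s-1)$. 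Since $s > n$, the exponent $s'$ is strictly below $2^*$, and the $D^2u$-component of $D\psi$ can be reabsorbed into the coercive term via Young with a controlled constant. Using the elementary bound $t\,\Phi'_\gamma(t) \leq 2(1+\gamma)\,\Phi_\gamma(t)$ from \eqref{eq48}, one arrives at a Caccioppoli-type inequality of the form
\[
\int \eta^2\,\Phi'_\gamma(P)\,|Du|^{p-2}\,|D^2u|^2\,dx
\leq C\,(1+\gamma)^{\beta_1}\,(1+\|k\|_{L^s}+\|f\|_{L^s})^{\beta_2} \int (|D\eta|^2+\eta^2)\,\Phi_\gamma(P)\,(1+|Du|)^p\,dx.
\]
Taking $\gamma=0$ in this estimate and using $\Phi_0(P) = P^2/(1+P)^2$ immediately gives \eqref{apriori3b}.

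For the sup bound, I would apply Sobolev embedding to the function $\eta\,G_\gamma(|Du|)$, where $G_\gamma$ is a primitive chosen so that $|DG_\gamma(|Du|)|^2 \sim \Phi'_\gamma(P)\,|Du|^{p-2}\,|D^2u|^2$ on $\{|Du|>1\}$. Combined with the weighted Caccioppoli, this produces a reverse-Hölder inequality of Moser type,
\[
\Bigl(\,\int_{B_\rho}(1+|Du|)^{(\gamma+p)\chi}\,dx\Bigr)^{1/\chi}
\leq \frac{C\,(1+\gamma)^{\beta}\,(1+\|k\|_{L^s}+\|f\|_{L^s})^{\mu}}{(r-\rho)^2}\int_{B_r}(1+|Du|)^{\gamma+p}\,dx,
\]
with a fixed gain $\chi>1$ depending only on $n$ and $s$, since $s>n$. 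A standard Moser iteration on a geometric sequence of radii and exponents $\gamma_j \to \infty$, together with the fact that the resulting infinite product of constants converges (because $(1+\gamma_j)^{\beta}\chi^{-j}$ is summable), then delivers \eqref{apriorisup}. The principal obstacle is the handling of the inhomogeneous datum $f$: because $f$ is only in $L^s$ and is not weakly differentiable, one cannot integrate by parts to move $\partial_s$ onto $f$, so the derivative remains on the test function and inevitably produces a $|D^2u|$-factor. The fact that $s>n$---equivalently $s' < 2^*$---is precisely the room needed to reabsorb this extra second-order term into the coercive integral via Young's inequality and to keep the iteration bounded; this is why the assumption $f \in L^s_{\mathrm{loc}}$ with $s > n$ is sharp for the argument.
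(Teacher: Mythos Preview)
Your overall architecture---a one-parameter family of weighted Caccioppoli inequalities feeding a Moser iteration, with \eqref{apriori3b} read off as the $\gamma=0$ case---matches the paper's proof. But there is a genuine gap in how you handle the cross-term arising from the derivative of the weight.

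The paper tests the Euler--Lagrange identity with $\psi=\sum_s\partial_{x_s}\bigl(\eta^2\,\Phi_\gamma(P)\,\partial_{x_s}u\bigr)$ (weight $\Phi_\gamma$, not $\Phi'_\gamma$). Expanding produces, besides the coercive piece $I_2$ and the cut-off, $k$- and $f$-terms you describe, a cross-term
\[
I_3=\sum_s\int_\Omega\eta^2\,\Phi'_\gamma(P)\,\bigl\langle D_{\xi\xi}\F(x,Du)\,\partial_{x_s}Du,\ DP\cdot\partial_{x_s}u\bigr\rangle\,dx.
\]
This term cannot be absorbed into $I_2$ by brute force via \textbf{(F3)}: absorption would require $\Phi'_\gamma(P)\,|Du|\le C\,\Phi_\gamma(P)$, i.e.\ $(2+\gamma P)/P\le C$, which blows up as $P\to 0^+$. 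The paper instead invokes the radial hypothesis \textbf{(F1)} to compute $D_{\xi\xi}\F$ explicitly and show $I_3\ge 0$ (see the argument leading to \eqref{I3}); this is the only place \textbf{(F1)} enters, and it is indispensable in the vectorial case $N>1$. Your sketch never uses \textbf{(F1)}, and your remark that ``the weight vanishes on the degeneracy set, so \textbf{(F2)}--\textbf{(F3)} can be used freely'' addresses only the coercive piece, not this structural obstruction. With your choice of weight $\Phi'_\gamma$ the analogous cross-term would carry $\Phi''_\gamma$, which changes sign (e.g.\ $\Phi''_0(t)=(2-4t)/(1+t)^4$), so the situation is worse, not better.

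A secondary point: the paper's treatment of the $f$-terms ($I_7$--$I_9$) and of $I_6$ requires splitting $\{1<|Du|<2\}$ versus $\{|Du|\ge 2\}$ together with a $\delta$-regularization that exploits the a priori hypothesis $|Du|^{p-2}|D^2u|^2\in L^1_{\mathrm{loc}}$; your direct H\"older bound $\|f\|_{L^s}\|D\psi\|_{L^{s'}}$ would still leave a second-order factor without the matching weight $\Phi_\gamma(P)$, and does not obviously close without similar care near $\{P=0\}$.
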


\begin{proof}
We will prove the theorem in 3 steps.

\noindent \textbf{Step 1. } The first step is to prove that for every
$\gamma\geqslant 0$ and for every non-negative  function $\eta\in C^{\infty}_{0}(\Omega)$, there exists a positive constant $C=C(n,N,\nu,p,L_1)$ such that
\begin{equation}\label{eq70}
\aligned
\int_{\Omega}\eta^2 \Phi(P)&\,|Du|^{p-2}|D^2u|^2\,dx\leq C(\gamma+1)^2\int_{\{x\in\Omega:\,\,|Du|>1\}}\eta^2k^2|Du|^{\gamma+p}\,dx \\
&+   C\int_{\{x\in\Omega:\,\,|Du|>1\}}|D\eta|^2|Du|^{\gamma+p}\,dx
+C(\gamma+1)^2\,\int_{\{x\in\Omega:\,\,|Du|>1\}}\eta^{2}|f|^{2}|Du|^{\gamma}\,dx,
\endaligned
\end{equation}
where $\Phi$ is the function defined at \eqref{phigamma} and
$P=(|Du|-1)_+$. 
\\
Since $u$ is a local
minimizer of $\mathbb{F}(\cdot;\Omega)$, it satisfies the following
integral identity
$$\int_{\Omega}\langle D_\xi\F(x,Du),D\psi\rangle=\int_{\Omega}f\cdot\psi\quad\quad\forall\psi\in C^{\infty}_{0}(\Omega,\R^{N}).$$
By our assumptions on $u$ and a standard approximation argument, we
can choose
$$\psi\equiv \sum_sD_{x_{s}}\Big(\eta^{2}\cdot\Phi(P)\cdot D_{x_{s}}u\Big),$$
where  $\eta\in C_0^\infty(\Omega)$. Such a choice, together with an
integration by parts in the left hand side of previous identity,
yields
\begin{equation}\label{eq71}
-\sum_s\int_{\Omega}\!\!\left\langle D_{x_s
\xi}\F(x,Du)+D_{\xi\xi}\F(x,Du)\cdot D_{x_{s}}Du
,D\big(\eta^{2}\Phi(P) D_{x_{s}}u\big)\right\rangle
=\sum_s\int_{\Omega}f\eta^{2}\Phi(P) D_{x_{s}}u
\end{equation}
We now use the product rule to calculate the derivatives of
$\eta^2\cdot\Phi(P)\cdot D_{x_{s}}u$. This converts \eqref{eq71}
into
$$I_{1}+I_{2}+I_{3}+I_{4}+I_{5}+I_{6}+I_{7}+I_8+I_9=0,$$
where
\begin{eqnarray*}
I_{1}&=&2\sum_s\int_\Omega\langle  D_{\xi\xi}\F(x,Du)\cdot
D_{x_{s}}Du, D\eta\cdot D_{x_{s}}u\rangle\, \eta\,
\Phi(P)\,dx,\cr\cr I_{2}&=&\sum_s\int_\Omega\langle
D_{\xi\xi}\F(x,Du)\cdot D_{x_{s}}Du, D_{x_{s}}Du\rangle\,
\eta^{2}\,\Phi(P)\,dx,\cr\cr I_{3}&=&\sum_s\int_\Omega\langle
D_{\xi\xi}\F(x,Du)\cdot D_{x_{s}}Du,\Phi'(P)\,D_{x_{s}}P\cdot
D_{x_{s}}u\rangle\,\eta^{2}\,dx,\cr\cr
I_{4}&=&2\sum_s\int_\Omega\langle D_{x_{s}\xi}\F(x,Du),D\eta\cdot
D_{x_{s}}u\rangle\, \eta\, \Phi(P)\,dx,\cr\cr
I_{5}&=&\sum_s\int_\Omega\langle
D_{x_{s}\xi}\F(x,Du),D_{x_{s}}Du\rangle\,
\eta^{2}\,\Phi(P)\,dx,\cr\cr I_{6}&=&\sum_s\int_\Omega\langle
D_{x_{s}\xi}\F(x,Du),\Phi'(P)\,D_{x_{s}}P\cdot D_{x_{s}}u \rangle\,
\eta^{2}\,dx,\cr\cr I_{7}&=&2\sum_s \int_\Omega f\,\eta\,
\Phi(P)\,D_{x_s}\eta\cdot D_{x_{s}}u \,dx,\cr\cr
I_8&=&\sum_s\int_\Omega f\,\eta^{2}\,\Phi(P)\, D_{x_s x_s}u
\,dx,
\cr\cr I_9&=&\sum_s\int_\Omega f\,\eta^2\,\Phi'(P)\,D_{x_{s}}u
\cdot D_{x_s}P  \,dx.
\end{eqnarray*}
We will estimate each term separately. It is worth pointing out that the integrals $I_i$, with $i=1,\dots,6$
will be estimate with arguments similar to those in  \cite{EMM}, \cite{CGGP}. We will report here for the sake of completeness.\\
\\
For the estimate of $I_{1}$, we use assumption {\bf{(F3)}} and
Young's inequality as follows
\begin{align}\label{eq46.1}
|I_{1}|\leqslant&
2L_1\sum_s\int_\Omega \eta\,|D\eta|\,|Du|^{p-2}\,|D_{x_{s}}Du|\,\,|D_{x_{s}}u|\,\Phi(P)\,\,dx\nonumber\\
\leqslant& \varepsilon\int_\Omega \eta^{2}
|Du|^{p-2}|D^2u|^{2}\Phi(P)\,dx+C_\varepsilon(L_{1})\int_\Omega
|D\eta|^{2}|Du|^{p}\Phi(P)\,dx,
\end{align}
where $\varepsilon>0$ will be chosen later.
In order to estimate $I_{4}$, we use assumption {\bf{(F4)}} and
Young's inequality as follows,
\begin{equation}\label{eq47.3}
\aligned |I_{4}|
&\leqslant 2\int_\Omega \eta\,|D\eta|\,k\,|Du|^{p}\,\Phi(P)\,dx \\
&\leqslant C\int_\Omega \eta^2\,
k^2\,|Du|^{p}\,\Phi(P)\,dx+C\int_\Omega  |D\eta|^2
\,|Du|^{p}\,\Phi(P)\,dx.
\endaligned
\end{equation}
We estimate $I_{5}$, using {\bf{(F4)}} and  Young's inequality
again. Indeed
\begin{equation}\label{eq47.4}
\aligned
|I_{5}|&\leqslant\int_\Omega \eta^{2} k\,|Du|^{p-1}\,|D^2u|\,\Phi(P)\,dx\\
&\leqslant \varepsilon\int_\Omega
\eta^{2}\,|Du|^{p-2}\,|D^{2}u|^{2}\,\Phi(P)\, \,dx
+C_\varepsilon\int_\Omega \eta^{2} \,k^{2}\,|Du|^{p} \,\Phi(P)\,dx.
\endaligned
\end{equation}
For the estimate of $I_{6}$, again by virtue of assumption
{\bf{(F4)}}, we have

\begin{equation}\label{eq47.1}
\aligned |I_{6}|
&\leqslant \sum_s\int_\Omega \eta^2\,k\,|Du|^{p-1}\,|D_{x_{s}}u|\,\Phi'(P)\,|D_{x_{s}}P|\,dx \\
&\leqslant C\int_\Omega \eta^2\,k\,|Du|^{p}\,\Phi'(P)\,|DP|\,dx \leqslant C\int_\Omega \eta^2\,k\,|Du|^{p}\,\Phi'(P)|D^2u|\\
&= C\int_{\{|Du|\ge 2\}
}\eta^2\,k\,|Du|^{p}\,\Phi'(P)|D^2u|+C\int_{\{1<|Du|< 2\}
}\eta^2\,k\,|Du|^{p}\,\Phi'(P)|D^2u|,
\endaligned
\end{equation}
where we used the equality in \eqref{equality}. Noting that
\begin{equation}\label{est2}
    |Du|=(|Du|-1)_++1\le 2(|Du|-1)_+\quad\text{on the set}\,\,\, \{|Du|\ge 2\}
\end{equation}
and, recalling \eqref{eq48}, we can estimate the first integral in
the right hand side of previous inequality as follows
\begin{equation}\label{I6b}
\aligned
&C\int_{\{|Du|\ge 2\} }\eta^2\,k\,|Du|^{p}\,\Phi'(P)|D^2u|\le 2C \int_{\{|Du|\ge 2\} }\eta^2\,k\,|Du|^{p-1}\,P\Phi'(P)|D^2u|\\
&\le 4C(1+\gamma)\int_{\{|Du|\ge 2\} }\eta^2\,k\,|Du|^{p-1}\,\Phi(P)|D^2u|\\
&\le \varepsilon\int_{\{|Du|\ge 2\}
}\eta^2\,|Du|^{p-2}\,\Phi(P)|D^2u|^2+C_\varepsilon(1+\gamma)^2\int_{\{|Du|\ge
2\} }\eta^2\,k^2\,|Du|^{p}\,\Phi(P).
\endaligned
\end{equation}
 After setting $ \Gamma_{\gamma} =2(1+\gamma)>0$,  we multiply and divide the last integrand in \eqref{eq47.1}
 by $ \left(\frac{\delta +P}{\Gamma_\gamma}\right)^{1/2}$ with $0<\delta<1$, and use Young's inequality, thus
 obtaining
\begin{eqnarray}\label{eq52}
&&C\int_{\{1<|Du|< 2\} }
\eta^{2}\,\Phi'(P)\,\bigg\{\frac{\delta+P}{\Gamma_{\gamma}}
\,|Du|^{p-2}\,|D^{2}u|^{2}\bigg\}^{\frac{1}{2}}\times \bigg\{
\frac{\Gamma_{\gamma}}{\delta+P}\,k^{2}\,|Du|^{p+2}\bigg\}^{\frac{1}{2}}\,dx\cr\cr
&\leqslant&
 \varepsilon \int_{\{1<|Du|< 2\} } \eta^{2}\,\Phi'(P)\,\frac{\delta+P}{\Gamma_\gamma}\,|Du|^{p-2}\,|D^{2}u|^{2}\,dx\cr\cr
 &&+C_\varepsilon\int_{\{1<|Du|< 2\} }\eta^{2}\,k^{2}\,\Phi'(P)\,\frac{\Gamma_\gamma}{\delta+P}\, |Du|^{p+2}\,dx\cr\cr
 &\le&
\dfrac{\varepsilon}{\Gamma_{\gamma}}\int_{\{1< |Du|< 2\}}\eta^2\,\Phi'(P)\,P\,|Du|^{p-2}\,|D^{2}u|^2\,dx\nonumber+\frac{\varepsilon\delta}{\Gamma_{\gamma}}\int_{\{1< |Du|< 2\}}\eta^2\,\Phi'(P)\,|Du|^{p-2}\,|D^{2}u|^2\,dx\nonumber\\
 &&+C_\varepsilon\int_{\{1<|Du|< 2\} }\eta^{2}\, k^{2}\,\Phi'(P)\,\frac{\Gamma_\gamma}{\delta+P}\,|Du|^{p+2}\,dx\cr\cr
&\leqslant& \varepsilon\int_{\{1< |Du|<
2\}}\eta^2\,\Phi(P)\,|Du|^{p-2}\,|D^{2}u|^2\,dx+\varepsilon\delta\tilde C_{\gamma}\int_{\{1< |Du|< 2\}}\eta^2\,\,|D^{2}u|^2\,|Du|^{p-2}\,dx\nonumber\\
 &&+\Gamma^2_\gamma C_\varepsilon\int_{\{1<|Du|< 2\} }\eta^{2}\, k^{2}\,|Du|^{\gamma+p}\,dx
\end{eqnarray}
where, in the last line we used \eqref{eq48}, \eqref{phiprimo} and the fact that, since
$\dfrac{P}{\delta+P}\leqslant 1$, we have
\begin{equation}\label{eq88}
\aligned (\delta+P)^{-1}\Phi'(P)
&=\Phi^{\prime}(P)\cdot\dfrac{P}{\delta+P}\cdot P^{-1} \\
&\leqslant \Gamma_{\gamma}\Phi(P)\cdot P^{-2}=\Gamma_\gamma\,(1+P)^{\gamma-2}\\
&=\Gamma_\gamma|Du|^{\gamma-2}\qquad\qquad\text{in the
set}\,\quad\{1<|Du|<2\}.
\endaligned
\end{equation}
 Plugging \eqref{I6b} and \eqref{eq52} into \eqref{eq47.1}, we get
\begin{align*}
|I_{6}| &\leqslant
2\varepsilon\int_\Omega \eta^{2}\Phi(P)|Du|^{p-2}|D^{2}u|^{2}\,dx+\varepsilon\delta\tilde C_{\gamma}\int_{\Omega}\eta^2\,\,|D^{2}u|^2\,|Du|^{p-2}\,dx\nonumber\\
 &+\Gamma^2_\gamma C_\varepsilon\int_{\{|Du|>1\} }\eta^{2}\, k^{2}\,|Du|^{\gamma+p}\,dx.
\end{align*}
By virtue of the assumption $|D^2u|^2\,|Du|^{p-2}\in
L^1_{\mathrm{loc}}(\Omega)$, we can let $\delta\to 0$ in previous
estimate thus getting
\begin{equation}\label{52.1}
|I_{6}|\le 2\varepsilon\int_\Omega
\eta^{2}\Phi(P)|Du|^{p-2}|D^{2}u|^{2}\,dx+ C_\varepsilon
(1+\gamma)^2\int_{\{|Du|>1\} }\eta^{2}\, k^{2}\,|Du|^{\gamma+p}\,dx,
\end{equation}
where we used that $\Gamma_\gamma\sim (\gamma+1)$. For $I_{7}$, using
Young's inequality {and recalling that $P=(|Du|-1)_+$}, we get
\begin{equation}\label{eq85.2}
\aligned|I_{7}|&\le 2\int_\Omega \eta|D\eta||f||Du|\Phi(P)
=2\int_{\{|Du|>1\}} \eta|D\eta||f||Du|^{\gamma-1}(|Du|-1)^{2}\\
&\le C\int_{\{|Du|>1\}} \eta^2|f|^2(|Du|-1)^{2}|Du|^{\gamma-p}+ C\int_{\{|Du|>1\}} |D\eta|^2(|Du|-1)^{2}|Du|^{\gamma+p-2}\\
&= C\int_{\{|Du|>1\}} \eta^2|f|^2\Phi(P)|Du|^{2-p}+ C\int_{\{|Du|>1\}} |D\eta|^2\Phi(P)|Du|^{p}
\\
&\le C\int_{\{|Du|>1\}} \eta^2|f|^2\Phi(P)+ C\int_{\{|Du|>1\}} |D\eta|^2\Phi(P)|Du|^{p},
\endaligned\end{equation}
where we used that $p\ge 2$ and that the set of integration is
$\{|Du|>1\}$. Concerning $I_8$ and $I_9$, by \eqref{est2} and
Young's inequality, we have
\begin{eqnarray}
\label{eq8520} |I_{8}|+|I_{9}|&\le&
\int_\Omega\eta^2|f||D^2u|\Phi(P)+\int_{\Omega}\eta^2|f||Du||D^2u|\Phi'(P)\cr\cr
&=&
\int_{\{|Du|>1\}}\eta^2|f||D^2u|\Phi(P)+\int_{\{|Du|\ge2\}}\eta^2|f||Du||D^2u|\Phi'(P)+\int_{\{1<|Du|<2\}}\eta^2|f||Du||D^2u|\Phi'(P)\cr\cr
&\le&\int_{\{|Du|>1\}}\eta^2|f||D^2u|\Phi(P)+2\int_{\{|Du|\ge2\}}\eta^2|f||D^2u|P\Phi'(P)+\int_{\{1<|Du|<2\}}\eta^2|f||Du||D^2u|\Phi'(P)\cr\cr
&\le&
c\Gamma_\gamma\int_{\{|Du|>1\}}\eta^2|f||D^2u|\Phi(P)+\int_{\{1<|Du|<2\}}\eta^2|f||Du||D^2u|\Phi'(P).
\end{eqnarray}
 The first integral in the right hand side of \eqref{eq8520} can be estimated by Young's inequality as follows
\begin{equation}\label{eq85bis}
\aligned &c\Gamma_\gamma\int_{\{|Du|>1\}}\eta^2\,|f|\,|D^2u|\,\Phi(P)
\\
 &\le\varepsilon \int_{\{|Du|>1\}}\eta^2\,|D^2u|^2\,\Phi(P)+
 C_\varepsilon\cdot \Gamma_\gamma^2\int_{\{|Du|>1\}}\eta^2|f|^2\,\Phi(P)\\
&\leq\varepsilon\int_{\Omega}\eta^2|Du|^{p-2}|D^2u|^{2}\Phi(P)+C_\varepsilon\cdot
\Gamma_\gamma^2\int_{\Omega}\eta^2|f|^2\Phi(P),
\endaligned
\end{equation}
where we used that $|Du|^{p-2}>1$ on the set $|Du|>1$ since $p>2$,
and that, as before, $\Gamma_\gamma\sim (\gamma+1)$. To estimate the second
integral in \eqref{eq8520}, we argue as we did for $I_6$, multiplying
and dividing it by $\left(\frac{\delta+P}{\Gamma_\gamma}\right)^{\frac{1}{2}}$
with $0<\delta<1$ and  using Young's inequality. We get
\begin{eqnarray}
\label{eq85ter} &&\int_{\{1<|Du|<2\}}\eta^2|f||Du||D^2u|\Phi'(P)=
\int_{\{1<|Du|<2\}}\eta^2\Phi'(P)\left\{
\frac{\delta+P}{\Gamma_\gamma}|D^2u|^2\right\}^{\frac{1}{2}}\left\{|f|^2|Du|^2
\frac{\Gamma_\gamma}{\delta+P}\right\}^{\frac{1}{2}} \cr\cr &\le&
\frac{\varepsilon}{\Gamma_\gamma}\int_{\{1<|Du|<2\}}\eta^2(\delta+P)|D^2u|^{2}\Phi'(P)+
C_\varepsilon\cdot \Gamma_\gamma\int_{\{1<|Du|<
2\}}\eta^2\frac{|Du|^2|f|^2}{(\delta+P) }\Phi'(P)\cr\cr &\le&
\frac{\varepsilon}{\Gamma_\gamma}\int_{ \{1<|Du|<
2\}}\eta^2\,|D^2u|^{2}P\Phi'(P)+\frac{\varepsilon}{\Gamma_\gamma}
\delta\int_{\{1<|Du|< 2\}}\eta^2|D^2u|^{2}\Phi'(P)\cr\cr &&+
C_\varepsilon\cdot \Gamma_\gamma\int_{\{1<|Du|<
2\}}\eta^2\frac{|Du|^2|f|^2}{(\delta+P) }\Phi'(P)\cr\cr &\le&
\varepsilon\int_{\{1<|Du|<2\}}\eta^2|D^2u|^{2}\Phi(P)+\delta
\varepsilon \tilde C_\gamma \int_{ \{1<|Du|<
2\}}\eta^2|D^2u|^{2}|Du|^{p-2}\cr\cr &&+
C_\varepsilon\,\Gamma^2_\gamma\int_{\{1<|Du|<
2\}}\eta^2|Du|^\gamma|f|^2\cr\cr &\le&
\varepsilon\int_{\{1<|Du|<2\}}\eta^2|D^2u|^{2}|Du|^{p-2}\Phi(P)+\delta
\varepsilon \tilde C_\gamma \int_{ \{1<|Du|<
2\}}\eta^2|D^2u|^{2}|Du|^{p-2}\cr\cr &&+ C_\varepsilon
\Gamma^2_\gamma\int_{\{1<|Du|< 2\}}\eta^2|Du|^\gamma|f|^2,
\end{eqnarray}
where we used that $\delta+P\le 2P$ in the set $\{|Du|>2\}$,
inequalities \eqref{eq48}, \eqref{phiprimo}, \eqref{eq88} and that $|Du|^{p-2}>1$ in the
set $\{|Du|>1\}$. Inserting \eqref{eq85bis} and \eqref{eq85ter} in
\eqref{eq8520}  and letting $\delta\to 0$, we get
 \begin{eqnarray}
\label{eq852} |I_{8}|+|I_{9}|&\le&
 2\varepsilon \int_{\Omega}\eta^2|Du|^{p-2}|D^2u|^{2}\Phi(P)+
C_\varepsilon\, \Gamma^2_\gamma\int_{\{|Du|>
1\}}\eta^2|Du|^{\gamma}|f|^2.
\end{eqnarray}

\noindent We remind that
\begin{align}\label{eq81}
I_{2}+I_{3}=-I_{1}-I_{4}-I_{5}-I_{6}-I_{7}-I_8-I_9.
\end{align}

\noindent We now elaborate on the precise form of
$D_{\xi\xi}\F(x,\xi)$ to estimate $I_3$. To do this, we abuse of
notation and for every scalar $t$ we denote $F'(x,t)=\partial_t
F(x,t)$ and $F''(x,t)=\partial_{tt}F(x,t)$. By {\bf{(F1)}},  for
every $\xi\in\mathbb{R}^{n\times N}\setminus \{0\}$ one has
$$
D_{\xi\xi}\F(x,\xi)=\left(\frac{F''(x,|\xi|)}{|\xi|^2}-\frac{F'(x,|\xi|)}{|\xi|^3}\right)\,\xi\otimes\xi
+\frac{F'(x,|\xi|)}{|\xi|}\,\Id_{\R^{n\times N}}.
$$
Componentwise,
\begin{eqnarray*}
D_{\xi_{j}^{\beta}\xi_i^\alpha}\F(x,\xi)&=&D_{\xi_{j}^{\beta}}\left(F'(x,|\xi|)\,\frac{\xi_i^\alpha}{|\xi|}\right)\cr\cr
&=&
\left(\frac{F''(x,|\xi|)}{|\xi|^2}-\frac{F'(x,|\xi|)}{|\xi|^3}\right)\xi_{i}^\alpha\xi_j^\beta
+\frac{F'(x,|\xi|)}{|\xi|}\delta_{\xi_i^\alpha \xi_j^\beta }.
\end{eqnarray*}
Recalling the equality in \eqref{equality}, it is well known that
for a.e. $x\in \{|Du|\ge 1\},$ we have
\begin{equation}
\aligned \sum_s\langle & D_{\xi\xi}\F(x,Du)\cdot D_{x_s}Du, DP\cdot
D_{x_s}u\rangle
=\sum_{s,i,j,\alpha,\beta}D_{\xi_{j}^{\beta}\xi_i^\alpha}\F(x,Du) u_{x_s}^{\alpha}u_{x_sx_j}^{\beta} P_{x_i}\\
%&=
%\left(\frac{F''(x,|Du|)}{|Du|^2}-\frac{F'(x,|Du|)}{|Du|^3}\right)
%\sum_{s,i,j,\alpha,\beta} u_{x_s}^{\alpha}u_{x_sx_j}^{\beta}u_{x_i}^{\alpha}u_{x_j}^{\beta} P_{x_i}+\frac{F'(x,|Du|)}{|Du|}\sum_{s,i,\alpha} u_{x_s}^{\alpha}u_{x_sx_i}^{\alpha}P_{x_i}\\
%&=
%\left(\frac{F''(x,|Du|)}{|Du|}-\frac{F'(x,|Du|)}{|Du|^2}\right)
%\sum_{s,i,\alpha}u_{x_s}^{\alpha}(|Du|)_{x_s}u_{x_i}^{\alpha}(|Du|)_{x_i}+\frac{F'(x,|Du|)}{|Du|}\sum_{s,i,\alpha}u_{x_s}^{\alpha}u_{x_sx_i}^{\alpha}(|Du|)_{x_i}\\
&=
\left(\frac{F''(x,|Du|)}{|Du|}-\frac{F'(x,|Du|)}{|Du|^2}\right)\sum_{\alpha}\left(\sum_{i}u_{x_i}^{\alpha}(|Du|)_{x_i}\right)^2+F'(x,|Du|)|D(|Du|)|^2.
%\label{3.7}
\endaligned
\end{equation}
Thus,
 \begin{align*}
I_3&
=\int_{\Omega}\eta^2\Phi'(P)\frac{F''(x,|Du|)}{|Du|}\sum_{\alpha}\left(\sum_{i}
u_{x_i}^{\alpha}(|Du|)_{x_i}\right)^2\,dx
\\ &+\int_{\Omega}\eta^2\Phi'(P)\,F'(x,|Du|)\left(|D(|Du|)|^2-\frac{\sum_{\alpha}\left(\sum_{i}
u_{x_i}^{\alpha}(|Du|)_{x_i}\right)^2}{|Du|^2}\right)\,dx.
\end{align*}
The use of the Cauchy-Schwartz inequality implies \[
\sum_{\alpha}\left(\sum_{i} u_{x_i}^{\alpha}(|Du|)_{x_i}\right)^2\le
|Du|^2|D(|Du|)|^2.
\]
 and since
\begin{equation}\label{Phi'}
\Phi'(t)=(1+t)^{\gamma-3}t(\gamma t+2)
\end{equation}
is nonnegative for every $t\ge 0$ and,  by {\bf{(F2)}},
$F'(x,|Du|)\ge 0$, then we conclude that
\begin{equation}\label{I3}
I_3\ge \int_{\Omega}\eta^2
\Phi'(P)\,\frac{F''(x,|Du|)}{|Du|}\sum_{\alpha}\left(\sum_{i}
u_{x_i}^{\alpha}(|Du|)_{x_i}\right)^2\,dx\ge 0.
\end{equation}
Therefore, using that $I_{3}\geqslant 0$ together with \eqref{eq81}
we have
\begin{align}\label{eq80}
I_{2}\leqslant |I_{1}|+|I_{4}|+|I_{5}|+|I_{6}|+|I_{7}|+|I_8|+|I_9|.
\end{align}
On the other hand, the ellipticity assumption {\bf{(F2)}} gives
that
\begin{align}\label{eq82}
I_{2} &\geqslant\nu\int_\Omega
\eta^{2}\cdot\Phi(P)\cdot|Du|^{p-2}\cdot|D^{2}u|^{2}\,dx.
\end{align}
Inserting  estimates \eqref{eq46.1}, \eqref{eq47.3}, \eqref{eq47.4},
\eqref{52.1}, \eqref{eq85.2}, \eqref{eq852} and \eqref{eq82} into
\eqref{eq80}, we obtain
\begin{align}\label{eq92}
&\nu\int_\Omega \eta^{2}\cdot\Phi(P)\, |Du|^{p-2}\,|D^{2}u|^{2}\,dx\nonumber\\
&\le 6\,\varepsilon\int_\Omega \eta^{2}\Phi(P)\,
|Du|^{p-2}\,|D^{2}u|^{2}\,dx +
C_\varepsilon (L_{1})\int_{\{|Du|>1\}} |D\eta|^{2}|\, |Du|^{p+\gamma}\,dx\nonumber\\
&+C_\varepsilon\int_{\{|Du|>1\}} \eta^{2} k^{2}\,|Du|^{p+\gamma}\,dx+C_\epsilon(\gamma+1)^2\int_{\{|Du|>1\}} \eta^{2} k^{2}\,|Du|^{p+\gamma}\,dx\nonumber\\
&+C_\varepsilon(\gamma+1)^2 \int_{\{|Du|>1\}}
\eta^{2}|f|^2|Du|^{\gamma}\,dx,
\end{align}
where we used that $\Phi(P)\le (1+P)^\gamma$. We now choose
$\varepsilon=\frac{\nu}{12}$, and reabsorb the first integral in the
right hand side by the left hand side. We obtain
\begin{align*}&\int_\Omega \eta^{2}\cdot\Phi(P)\, |Du|^{p-2}\,|D^{2}u|^{2}\,dx\nonumber\\
&\le C\int_{\{|Du|>1\}} |D\eta|^{2}|\,|Du|^{p+\gamma}\,dx+C\int_{\{|Du|>1\}} \eta^{2} k^{2}\,|Du|^{p+\gamma}\,dx\nonumber\\
&+C(\gamma+1)^2\int_{\{|Du|>1\}}\eta^{2} k^{2}\,|Du|^{p+\gamma}\,dx+ C(\gamma+1)^2\int_{\{|Du|>1\}} \eta^{2}|f|^2 |Du|^{\gamma}\,dx\nonumber\\
&\le C\int_{\{|Du|>1\}} |D\eta|^{2}|\,|Du|^{p+\gamma}\,dx+C(\gamma+1)^2\int_{\{|Du|>1\}} \eta^{2} k^{2}\,|Du|^{p+\gamma}\,dx\nonumber\\
&+ C(\gamma+1)^2\int_{\{|Du|>1\}} \eta^{2}|f|^2|Du|^{\gamma}\,dx,
\end{align*}
with $C=C(n,N,p,\nu, L_1)$, that is inequality \eqref{eq70}.
%$1+P=|Du|$, the above inequality can be rewritten as
%\begin{align}\label{eq91}
%\int_\Omega \eta\cdot\Phi(P)\cdot|D\eta|k(x)|Du|^{p}\,dx
%&=\int_\Omega \bigg\{\eta^{2}k^{2}(x)\Phi(P)|Du|^{p}\bigg\}^{\frac{1}{2}}\bigg\{|D\eta|^{2}\Phi(P)|Du|^{p}\bigg\}^{\frac{1}{2}}\,dx\nonumber\\
%&\leqslant
%\dfrac{1}{2}\int_\Omega \eta^{2}k^{2}(x)(1+P)^{\gamma+p}\,dx
%+\dfrac{1}{2}\int_\Omega |D\eta|^{2}(1+P)^{\gamma+p}\,dx.
%\end{align}
%
%Now, considering lemma (?), $|Du|^{p-2}|D^{2}u|^{2}\chi_{\{|Du|\geqslant 1\}}\in  L^{1}_{\loc}(\Omega)$, we can tend $\delta$ to $0$ in \eqref{eq90}, and also plugging \eqref{eq91} and \eqref{eq111} into \eqref{eq90},  we obtain
%\begin{align}\label{eq98}
%\int_\Omega \eta^{2}(1+P)^{\gamma+p}|D^{2}u|^{2}\,dx
%&\leqslant
%C\int_\Omega |D\eta|^{2}(1+P)^{\gamma+p}\,dx
%+C\int_\Omega \eta^{2}k^{2}(x)(1+P)^{\gamma+p}\,dx\nonumber\\
%&+C\int_\Omega \eta^{2}|f|^{2}(1+P)^{\gamma-p+4}\,dx,
%\end{align}
%where we also updated the constant $C$. Finally, our desired inequality follows recalling $P=(|Du|-1)_{+}$.

\medskip
\noindent \textbf{\textit{Step 2.}} Fix a ball
$B_{R}(x_{0})\Subset\Omega$ and radii $0<\rho<r<t<R$. Let  $\eta\in C^{\infty}_{0}(B_{t})$ be a  cut off
function such that $\eta\equiv 1$ on $B_{r}$ and $|D\eta|\leqslant
\dfrac{C}{t-r}$. Recalling the definition of $\Phi(t)$ and that $|Du|=1+P$ on the set $\{|Du|>1\}$,  inequality \eqref{eq70}  can be written as follows
\begin{eqnarray}\label{step2.1}
&&\int_{\{|Du|>1\}}\eta^2 P^{2}\,(1+P)^{\gamma+p-4}|D^2u|^2\,dx
\cr\cr
&\le & C\int_{\{|Du|>1\}} |D\eta|^{2}(1+P)^{\gamma+p}\,dx+C(\gamma+1)^2\int_{\{|Du|>1\}} \eta^{2}(k^{2}+|f|^{2})\,(1+P)^{\gamma+p}\,dx\cr\cr
&=: &  J_1+J_2
\end{eqnarray}
%We estimate  $J_{i}$, $i=1,2,3$ using the assumptions $k,\,f\in
%L^n\log^{\alpha}L_{\rm{loc}}(\Omega)$,  H\" older's inequalities and
%the properties of $\eta$.
 Using the properties of $\eta$, we get
\begin{align}\label{eq94}
J_{1} \leqslant
\frac{C}{(t-r)^2}\int_{B_{t} \cap \{|Du|>1\}}(1+P)^{(\gamma+p) }\,dx.%\leqslant
%\frac{C(n,s)}{(t-s)^2}\bigg(\int_{B_{t}}(1+P)^{\frac{s(\gamma+p)}{s-2}}\,dx\bigg)^{\frac{s-2}{s}},
\end{align}
In order to estimate $J_2$, it is convenient to split  the  ball $B_t$ as follows
$$ B_t=\left\{x\in B_t:\, k^2+|f|^2\le \exp\left(\frac{1}{\varepsilon}\right)-e\right\}\bigcup \left\{x\in B_t:\, k^2+|f|^2> \exp\left(\frac{1}{\varepsilon}\right)-e\right\}=:B^1_\varepsilon\cup B^2_\varepsilon$$
so that
\begin{eqnarray}\label{J2}
J_2&=&C(\gamma+1)^2\left[\int_{B^1_\varepsilon\cap\{|Du|>1\}}(k^{2}+|f|^{2})\,(1+P)^{\gamma+p}\,dx+
\int_{B^2_\varepsilon\cap\{|Du|>1\}}(k^{2}+|f|^{2})\,(1+P)^{\gamma+p}\,dx\right]\cr\cr
    &\le &C(\gamma+1)^2\left[\left(\exp\left(\frac{1}{\varepsilon}\right)-e\right)\int_{B_t\cap\{|Du|>1\}}\,(1+P)^{\gamma+p}\,dx+
\int_{B^2_\varepsilon\cap\{|Du|>1\}}(k^{2}+|f|^{2})\,(1+P)^{\gamma+p}\,dx\right]\cr\cr
&\le& C(\gamma+1)^2\left[\exp\left(\frac{1}{\varepsilon}\right)\int_{B_t\cap\{|Du|>1\}}\,(1+P)^{\gamma+p}\,dx+
\int_{B^2_\varepsilon\cap\{|Du|>1\}}(k^{2}+|f|^{2})\,(1+P)^{\gamma+p}\,dx\right]\cr\cr
&=:& J_2^1+J_2^2
\end{eqnarray}
We estimate  $J_{2}^2$,  by virtue of the assumptions $k,\,f\in
L^n\log^{\alpha}L_{\rm{loc}}(\Omega)$. In fact, we have
\begin{eqnarray}\label{J22}
J_2^2&=& C(\gamma+1)^2  \int_{B^2_\varepsilon\cap\{|Du|>1\}}(k^{2}+|f|^{2})\log^{\frac{2\alpha}{n}}(e+k^{2}+|f|^{2})\log^{-\frac{2\alpha}{n}}(e+k^{2}+|f|^{2})\,(1+P)^{\gamma+p}\,dx
\cr\cr
&\le & C(\gamma+1)^2    \left(\int_{B^2_\varepsilon\cap\{|Du|>1\}}(k^{n}+|f|^{n})\log^{\alpha}(e+k^{2}+|f|^{2})\,dx\right)^{\frac{2}{n}}\cr\cr
&&\qquad\qquad\cdot\left(\int_{B^2_\varepsilon\cap\{|Du|>1\}}\log^{-\frac{2\alpha}{n-2}}(e+k^{2}+|f|^{2})\,(1+P)^{\frac{n(\gamma+p)}{n-2}}\,dx\right)^{\frac{n-2}{n}}\cr\cr
&\le& C\varepsilon^{\frac{2\alpha}{n}}(\gamma+1)^2  ||k+|f|||^\vartheta_{L^n\log^{\alpha}L}
\left(\int_{B^2_\varepsilon\cap\{|Du|>1\}}\,(1+P)^{\frac{n(\gamma+p)}{n-2}}\,dx\right)^{\frac{n-2}{n}},
\end{eqnarray}
where we used H\"older's inequality, the definition of $B_\varepsilon^2$  \eqref{normint}, {and where now $\vartheta=\vartheta(n)$}.
Inserting \eqref{J22} in \eqref{J2}, we obtain
\begin{eqnarray}\label{J2final}
J_2&\le&    C(\gamma+1)^2\exp\left(\frac{1}{\varepsilon}\right)\int_{B_t\cap\{|Du|>1\}}\,(1+P)^{\gamma+p}\,dx\cr\cr
&&\qquad+C\varepsilon^{\frac{2\alpha}{n}}(\gamma+1)^2   ||k+|f|||^\vartheta_{L^n\log^{\alpha}L}
\left(\int_{B^2_\varepsilon\cap\{|Du|>1\}}\,(1+P)^{\frac{n(\gamma+p)}{n-2}}\,dx\right)^{\frac{n-2}{n}}\end{eqnarray}
Setting
\begin{equation}\label{eq113}
E_{R}:=\norm{k}_{L^n\log ^{\alpha}L(B_R)}^{\vartheta}+\norm{f}_{L^n\log
^{\alpha}L(B_R)}^{\vartheta}
\end{equation}
and inserting \eqref{eq94} and \eqref{J2final} into
\eqref{step2.1}, and using the notation in \eqref{eq113}, we get

\begin{eqnarray}\label{eq102a}
&&\int_{\{|Du|>1\}}\eta^2\,P^2\,(1+P)^{\gamma+p-4}|D^2u|^2\,dx \le
\frac{C}{(t-r)^2}\int_{B_{t}}(1+P)^{(\gamma+p) }\,dx\cr\cr && \qquad
+ C(\gamma+1)^2\exp\left(\frac{1}{\varepsilon}\right)\int_{B_t}\,(1+P)^{\gamma+p}\,dx\cr\cr
&&\qquad+C\varepsilon^{\frac{2\alpha}{n}}(\gamma+1)^2 E_R
\left(\int_{B_t}\,(1+P)^{\frac{n(\gamma+p)}{n-2}}\,dx\right)^{\frac{n-2}{n}}.
\end{eqnarray}
{Following \cite{EMM}, for $t\ge 0$} we consider the auxiliary function
$$G(t)=1+\int_0^t (1+s)^{\frac{\gamma+p-4}{2}}s\,ds$$
{and, recall that, by Lemma 2.2 in \cite{EMM18}, the following inequalities}
\begin{equation}\label{propG}
    \frac{1}{2(\gamma+p)^2}(1+t)^{\frac{\gamma+p}{2}}\le G(t)\le 2(1+t)^{\frac{\gamma+p}{2}},\qquad\qquad\qquad G'(t)=t(1+t)^{\frac{\gamma+p-4}{2}}\,
\end{equation}
{hold true.}
%We now observe that $2\leq n<s<\infty$ implies $1<b<\frac{n}{n-2}\leq\infty$ and so there exists $m$ such that
%$$b<m\leq \frac{n}{n-2}.$$
%For such a choice of $m$, one has by the Sobolev embedding that
%$$W^{1,2}_{loc}\hookrightarrow L^{2^\ast}_{loc}\hookrightarrow L^{2m}_{loc}.$$
%Indeed, for $n>2$ this is clear, while for $n=2$ one simply abbuses of notation and replace $L^{2^\ast}$ by $BMO$.
Let us denote by
$$2^*=\begin{cases}\frac{2n}{n-2}\qquad\text{if}\,\,n>2\cr
\text{any exponent}\qquad \text{if}\,\,n=2.
    \end{cases}$$

\noindent We consider now the case $n>2$. By the Sobolev imbedding
Theorem, we get
$$
\left(\int_\Omega \Big| \eta
\,G(P)\Big|^{2^*}\,dx\right)^{\frac{2}{2^*}}\leq  C \int_\Omega
\Big| D(\eta \,G(P))\Big|^{2}\,dx \leq C\int_\Omega | D\eta|^2
G(P)^{2}\,dx+ C\int_{\Omega} \eta^2 G'(P)^{2}|DP|^2\,dx.
$$
Using the properties of $G(t)$ at \eqref{propG}  in the previous
inequality %and recalling that $1+P=|Du|$ on the set $\{|Du|>1 \}$,
we obtain
\begin{equation}\label{apriori3d}
\aligned
\frac{1}{(\gamma+p)^4}&\left(\int_\Omega  \eta^{2^*}(1+P)^{\frac{2^*(\gamma+p)}{2} }\,dx\right)^{\frac{2}{2^*}}\\
&\leq  c\,\int_\Omega | D\eta|^2 (1+P)^{\gamma+p}\,dx +  c\int_\Omega \eta^2 (1+P)^{\gamma+p-4}\,P^2\,|DP|^2\,dx\,\\
&\leq \frac{ c}{(t-r)^2}\int_{B_t}
(1+P)^{\gamma+p}\,dx+c\int_{\Omega} \eta^2
\Phi(P)\,(1+P)^{p-2}|D^2u|^2\,dx,
\endaligned
\end{equation}
where we also used the properties of the function $\eta$. Inserting
estimate \eqref{apriori3d} in \eqref{eq102a} and using that
$\eta\equiv 1$ on $B_r$, we get
\begin{equation*}%\label{eq100}
\aligned
\left(\int_{B_r}  \,(1+P)^{\frac{n(\gamma+p)}{n-2} }\,dx\right)^{\frac{n-2}{n}}&\leq \frac{C(\gamma+p)^4}{(t-r)^2}\int_{B_{t}}(1+P)^{(\gamma+p) }\,dx+C(\gamma+p)^6\exp\left(\frac{1}{\varepsilon}\right)\int_{B_t}\,(1+P)^{\gamma+p}\,dx\\
& \qquad +C\varepsilon^{\frac{2\alpha}{n}}(\gamma+p)^6  E_R^2
\left(\int_{B_t}\,(1+P)^{\frac{n(\gamma+p)}{n-2}}\,dx\right)^{\frac{n-2}{n}},
\endaligned
\end{equation*}
with $C=C(n,N,p,\nu,L_1)$.
Choosing $\varepsilon$   such  that
$$
C(\gamma+p)^6 \,E_{R}\cdot \varepsilon^{\frac{2\alpha}{n}}=\frac 1 2\qquad\Longleftrightarrow\qquad \frac{1}{\varepsilon}=(2CE_{R})^{\frac{n}{2\alpha}}(\gamma+p)^{\frac{3n}{\alpha}} \,
$$
and setting $\Theta=2CE_R $,
we get
\begin{eqnarray*}%\label{eq100}
\left(\int_{B_r} (1+P)^{\frac{n(\gamma+p)}{n-2}
}\,dx\right)^{\frac{n-2}{n}}&\leq& \frac 1 2 \,\left(\int_{B_t}
(1+P)^{\frac{n(\gamma+p)}{n-2} }\,dx\right)^{\frac{n-2}{n}}+
\frac{C(\gamma+p)^4}{(t-r)^2}\int_{B_{R}}(1+P)^{(\gamma+p)
}\,dx\cr\cr &+ & C(\gamma+p)^6 \cdot
\exp\left(\Theta^{\frac{n}{2\alpha}}(\gamma+p)^{\frac{3n}{\alpha}}
\right)\int_{B_{R} }\,(1+P)^{(\gamma+p)}\,dx
%\cr\cr &= & C(\gamma+p)^6 \,E^2_{R}\cdot e^{\frac{n-2}{n}\cdot
%k^{\frac{n}{\alpha}}\cdot (\gamma+p)^{\frac{3n}{\alpha}}},
\end{eqnarray*}
By  the iteration Lemma \ref{iter} we infer that\begin{eqnarray}\label{eq100t}
&& \left(\int_{B_{\rho}} (1+P)^{\frac{n(\gamma+p)}{n-2}
}\,dx\right)^{\frac{n-2}{n}}\leq
\frac{C(\gamma+p)^4}{(R-\rho)^2}\int_{B_{R}}(1+P)^{(\gamma+p)
}\,dx\cr\cr & &\qquad + C(\gamma+p)^6 \cdot
\exp\left(\Theta^{\frac{n}{2\alpha}}(\gamma+p)^{\frac{3n}{\alpha}}
\right)\int_{B_{R} }\,(1+P)^{(\gamma+p)}\,dx \cr\cr &\leq&
 C \frac{(\gamma+p)^6\cdot
 \exp\left(\Theta^{\frac{n}{2\alpha}}(\gamma+p)^{\frac{3n}{\alpha}} \right)}{(R-\rho)^2}
\int_{B_{R}}(1+P)^{(\gamma+p) }\,dx,
\end{eqnarray}
since we may suppose without loss of generality that $0<R-\rho<1$.
\\
\textbf{\textit{Step 3.}} Let us define the decreasing sequence of
radii $\rho_{j}$, $j\in \mathbb{N}$, by setting
\begin{align*}
\rho_{j}:=\rho+\dfrac{R-\rho}{2^{j}},
\end{align*}
and the increasing sequence of exponents
$$p_{j}= p\left(\frac{2^*}{2}\right)^{j}.$$
Since $\gamma\ge 0$ can take any value, estimate \eqref{eq100t} can
be written on every ball $B_{\rho_{j}}$ as follows
\begin{align}\label{eq102}
\bigg(\int_{B_{\rho_{j+1}}}(1+P)^{p_{j+1}}\,dx\bigg)^{\frac{1}{p_{j+1}}}
\leqslant \frac{C^{\frac{1}{p_j}}p_j^{\frac{6}{p_j}} 
\left(\exp\left(\Theta^{\frac{n}{2\alpha}}p_j^{\frac{3n}{\alpha}}
\right)\right)^{\frac{1}{p_j}}}{(\rho_{j}-\rho_{j+1})^{\frac{2}{p_j}}}
\bigg(\int_{B_{\rho_{j}}}(1+P)^{p_{j}}\,dx\bigg)^{\frac{1}{p_{j}}}.
\end{align}
Iterating estimate \eqref{eq102} we obtain
\begin{eqnarray*}
\bigg(\int_{B_{\rho}}(1+P)^{p_{m+1}}\,dx\bigg)^{\frac{1}{p_{m+1}}}
&\leqslant& \prod\limits^{m}_{j=0} \frac{C^{\frac{1}{p_j}}p_j^{\frac{6}{p_j}} 
\left(\exp\left(\Theta^{\frac{n}{2\alpha}}p_j^{\frac{3n}{\alpha}}
\right)\right)^{\frac{1}{p_j}}}{(\rho_{j}-\rho_{j+1})^{\frac{2}{p_j}}}
\bigg(\int_{B_{R}}(1+P)^{p}\,dx\bigg)^{\frac{1}{p}} \cr\cr
&\leqslant&
\prod\limits^{m}_{j=0}\frac{C^{\frac{1}{p_j}}4^{\frac{j+1}{p_j}}p_j^{\frac{6}{p_j}}
\left(\exp\left(\Theta^{\frac{n}{2\alpha}}p_j^{\frac{3n}{\alpha}}
\right)\right)^{\frac{1}{p_j}}}{(R-\rho)^{\frac{2}{p_j}}}
\bigg(\int_{B_{R}}(1+P)^{p}\,dx\bigg)^{\frac{1}{p}}
\end{eqnarray*}
where we used the definition of $\rho_j$. Now, we want to prove that the product in the right hand side of previous estimate is bounded by a constant. To this aim we write
$$\prod\limits^{m}_{j=0} \frac{C^{\frac{1}{p_j}}4^{\frac{j+1}{p_j}}p_j^{\frac{6}{p_j}} 
\left(\exp\left(\Theta^{\frac{n}{2\alpha}}p_j^{\frac{3n}{\alpha}}
\right)\right)^{\frac{1}{p_j}}}{(R-\rho)^{\frac{2}{p_j}}}=\prod\limits^{m}_{j=0}
\frac{C^{\frac{1}{p_j}}4^{\frac{j+1}{p_j}}p_j^{\frac{6}{p_j}}
}{(R-\rho)^{\frac{2}{p_j}}}\prod\limits^{m}_{j=0}
\exp\left(\Theta^{\frac{n}{2\alpha}}p_j^{\frac{3n}{\alpha}-1}
\right)$$
$$=\prod\limits^{m}_{j=0}
\frac{C^{\frac{1}{p_j}}4^{\frac{j+1}{p_j}}p_j^{\frac{6}{p_j}}
}{(R-\rho)^{\frac{2}{p_j}}}
\exp\left(\Theta^{\frac{n}{2\alpha}}\sum\limits^{m}_{j=0}
p_j^{\frac{3n}{\alpha}-1} \right)$$
$$=\displaystyle \exp\left(\log \frac{C}{(R-\rho)^2}\sum\limits^{m}_{j=0}\frac{1}{p_j} +\, 6\sum\limits^{m}_{j=0}\frac{1}{p_j}\log p_j
\, +\log 4\sum\limits^{m}_{j=0}\frac{j+1}{p_j} \,
+\Theta^{\frac{n}{2\alpha}}\sum\limits^{m}_{j=0}
p_j^{\frac{3n}{\alpha}-1}\right)
$$
$$\le  \displaystyle \exp\left(c\log \frac{C}{(R-\rho)^2}\Bigg[\sum\limits^{m}_{j=0}\frac{1}{p_j} +\, \sum\limits^{m}_{j=0}\frac{1}{p_j}\log p_j
\, +\sum\limits^{m}_{j=0}\frac{j+1}{p_j}\Bigg] \,
+\Theta^{\frac{n}{2\alpha}}\sum\limits^{m}_{j=0}
p_j^{\frac{3n}{\alpha}-1}\right).
$$
Taking in account the
definition of $p_j$, one can easily  prove that
\begin{align*}
\Bigg[\sum\limits^{m}_{j=0}\frac{1}{p_j} +\, \sum\limits^{m}_{j=0}\frac{1}{p_j}\log p_j
\, +\sum\limits^{m}_{j=0}\frac{j+1}{p_j}\Bigg]\le c(\sigma)\sum\limits^{+\infty}_{j=0}\left(\frac{2}{2^*}\right)^{j\sigma} < c(n),
\end{align*}
for some $\sigma\in (0,1)$
and
\begin{align*}
\sum\limits^{m}_{j=0} p_j^{\frac{3n}{\alpha}-1}&\leqslant
\sum\limits^{+\infty}_{j=0}
p_j^{\frac{3n}{\alpha}-1}=c(n,\alpha)
\end{align*}
since by assumption  $\frac{3n}{\alpha}-1<0$.
\\
Therefore,
\begin{align*}
\bigg(\int_{B_{\rho}}(1+P)^{p_{m+1}}\,dx\bigg)^{\frac{1}{p_{m+1}}}
&\leqslant \widehat C
\bigg(\int_{B_{R}}(1+P)^{p}\,dx\bigg)^{\frac{1}{p}},
\end{align*}
for every $m\in \mathbb{N}$. Now, letting $m\to\infty$ we end up
with
\begin{align*}
&\sup\limits_{B_{\rho}}|Du| \leq\lim_{m\to\infty}
\left(\int_{B_\rho}(1+P)^{p_m}\right)^\frac1{p_m}\leq \widehat C
\left(\int_{B_{R}}(1+|Du|)^{p}\right)^{\frac{1}{p}},\end{align*}
 that gives \eqref{apriorisup}. \\
 For further needs we observe that the constant $\widehat C$ has the form
 \begin{equation}\label{constant}
 \widehat C=c(n,p,\alpha,\rho,R)\exp\big( c(n,N,p,L_1,\nu,\alpha    )(||k||_{L^n\log^\alpha L}+||f||_{L^n\log^\alpha L})^{\vartheta}\big),
 \end{equation}
 {where $\vartheta=\vartheta(n)$.}
\\
\textbf{\textit{Step 4.}} In this Step we are going to establish
estimate \eqref{apriori3b}. To this aim it suffices to write estimate \eqref{eq70} with
$\gamma=0$ and with $\eta$ a cut off function between concentric balls $B_\rho$ and $B_R$, to get
 \begin{equation}\label{eq700}
\aligned \int_{B_\rho}\Phi(P)\,|Du|^{p-2}\,|D^2u|^2\,dx &\leq
C\int_{B_R}k^2\,|Du|^{p}\,dx
\\ &+    \frac{C}{(R-\rho)^2}\int_{B_R}\,\,|Du|^{p}\,dx
+C\int_{B_R} |f|^{2}\,dx.
\endaligned
\end{equation}
Using
estimate \eqref{apriorisup}, we obtain
\begin{equation}\label{eq701}
\aligned \int_{B_\rho}\Phi(P)\,|Du|^{p-2}|D^2u|^2\,dx
&\leq C\sup_{B_R}|Du|^p\int_{B_R}k^2\,dx +   \frac{C|B_R|}{(R-\rho)^2}\sup_{B_R}|Du|^p +C\int_{B_R}|f|^{2}\,dx \\
&\leq \hat{C} \biggm(\int_{B_{2R}}(1+|Du|)^p\,dx\bigg),
\endaligned
\end{equation}
for  a constant $\hat{C}=\hat{C}(n,N,p, L, L_1,
\nu,\rho,R,||k||_{L^n\log^\alpha L(B_{2R})},||f||_{L^n\log^\alpha
L(B_{2R})})$.
\\
The proof is finished in case $n>2$.
In the case $n=2$, it  suffices to use the appropriate
Moser-Trudinger Sobolev inequality and argue in the same way.
\end{proof}

\section{Proof of Theorem \ref{thmain1}}\label{appsubsection}

\noindent This section is devoted to the proof of Theorem \ref{thmain1}.  To this end, we state first an
approximation result, which we take from \cite{CCG, CupGuiMas}. It
shows that one can find a sequence of uniformly elliptic integrands
$\F_m$ that approximate the given $\F$. The approximants can be
chosen to be smooth in the $x$ variable, and also to have
ellipticity bounds on  the whole $\R^{n\times N}$, although
these bounds may depend on $m$ (see conditions
$\mathbf{(\tilde{F}_m2)}$-$\mathbf{(\tilde{F}_m4)}$ below).
Furthermore, these ellipticity conditions may be assumed uniform in
$m$ away from a ball of the $\xi$ variable (see conditions
$\mathbf{(\tilde{F}0)}$-$\mathbf{(\tilde{F}4)}$ below).  We recall
that, without loss of generality, we assumed that the radius $\mathcal{R}$
appearing in the assumptions {\bf{(F0)}}--{\bf{(F4)}} is equal to
$1$.

\begin{prop}\label{apprcupguimas2parte}
Let  $\F:\Omega\times\R^{n\times N}\to [0,+\infty)$ be a
Carath\'eodory function, convex  with respect to the second
variable, and  satisfying  assumptions {\bf{(F0)}}--{\bf{(F4)}}.
For a fixed   open set $\Omega'\Subset\Omega$, there exists a sequence
$\F_{m}:\Omega'\times\R^{n\times N}\to [0,+\infty)$ of
Carath\'eodory functions, $C^2$ and convex in the second variable,
such that $\F_m$ converges to $\F$ pointwise a.e. on $\Omega'$ and
everywhere in $\mathbb{R}^{{n\times}N}$. Moreover,  each $\F_{m}$
can be chosen so that the following properties are satisfied:
\begin{itemize}
\item[ $\mathbf{(\tilde{F}0)}$ ] there exist constants $\tilde L, \tilde\ell  >0$ depending on $\ell,L$ such that for all $(x,\xi)\in\Omega'\times \R^{n\times N}$
 $$\tilde\ell(|\xi|^p-1)\le \F_{m}(x,\xi)\le \tilde L(1+|\xi|)^{p},$$
\item[$\mathbf{(\tilde{F}1)}$] for every $x \in \Omega'$ and  $\xi\in \R^{nN}\setminus B_{2}(0)  $ one has $\F_{m}(x,\xi)=F_{m}(x,|\xi|)$,
\item[$\mathbf{(\tilde{F}2)}$] there exists $ \tilde\nu =\tilde\nu(\nu,p)$ such that for every  $x\in\Omega'$, $\xi\in\R^{n\times N}\setminus{B}_{2}(0)$ and  $\lambda\in \R^{n\times N}$
$$ \tilde{\nu}(1+|\xi|)^{p-2}|\lambda|^{2}\le \langle D_{\xi\xi}\F_{m}(x,\xi)\lambda,\lambda\rangle,$$
\item[$\mathbf{(\tilde{F}3)}$] there exists  $\tilde L_1>0$ such that  for every $(x,\xi)\in \Omega'\times \big(\R^{nN}\setminus B_{2}(0)\big)$
$$|D_{\xi\xi}\F_{m}(x,\xi)|\le   \tilde L_1(1+|\xi|)^{p-2},$$
\item[$\mathbf{(\tilde{F}4)}$] for every $x\in \Omega'$ and $\xi\in  \R^{nN}\setminus{B}_{2}(0)$,
$$|D_{\xi x}\F_{m}(x,\xi)|\le  {2^{p-1}}k_m(x){(1+|\xi|)^{p-1}}, $$
where $k_m\in C^{\infty}(\Omega')$ is a non-negative function such
that $k_m\to k$ strongly in $L^{n}\log^\alpha L(\Omega')$.
\end{itemize}
Moreover, the above properties can be extended to every $\xi\in
\R^{n\times N}$ in the following way:
\begin{itemize}
\item[$\mathbf{(\tilde{F}_m2)}$] There exists $\mu_{m}>0$ such that  for every $(x,\xi)\in\Omega'\times \R^{n\times N}$ and for every $\lambda\in \R^{n\times N}$
 $$\mu_{m}(1+|\xi|)^{p-2}|\lambda|^{2}
 \le \langle D_{\xi\xi}\F_{m}(x,\xi)\lambda,\lambda\rangle,$$
\item[$\mathbf{(\tilde{F}_m3)}$] there exists $\gamma_{m}>0$ such
that for every $(x,\xi)\in\Omega'\times \R^{nN}$
$$|D_{\xi\xi}\F_{m}(x,\xi)|\le  \gamma_{m}(1+|\xi|)^{p-2},$$
\item[$\mathbf{(\tilde{F}_m4)}$] there exists $\Lambda_m>0$ such that for every $x\in \Omega'$ and    $\xi\in  \overline{{B}_{2}(0)}$
$$|D_{\xi x}\F_{m}(x,\xi)|\le \Lambda_m(1+|\xi|)^{p-1}. $$
\end{itemize}
\end{prop}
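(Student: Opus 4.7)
The plan is a two-step regularization. First, I would convolve $\F(\cdot,\xi)$ in the spatial variable with a smooth radial mollifier to obtain smoothness in $x$ and a smooth $k_m$. Second, I would add a small radial coercive perturbation of the form $\sigma_m(1+|\xi|^2)^{p/2}$, with $\sigma_m \downarrow 0$, to secure global ellipticity in $\xi$ while preserving the uniform asymptotic ellipticity for large $|\xi|$.

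Concretely, fix $\Omega' \Subset \Omega$ and a standard radial mollifier $\rho_m$ supported in $B_{1/m}(0) \subset \R^n$ with $1/m < \operatorname{dist}(\Omega', \partial\Omega)$. Set
\[
\widetilde{\F}_m(x,\xi) \;=\; \int_{\R^n} \F(x-y,\xi)\,\rho_m(y)\,dy,
\qquad
\F_m(x,\xi) \;=\; \widetilde{\F}_m(x,\xi) + \sigma_m(1+|\xi|^2)^{p/2}.
\]
Since convolution acts only in $x$, it commutes with the operations in $\xi$ and preserves both convexity and the radial structure in $\xi$. Thus {\bf{(F0)}}--{\bf{(F3)}} transfer from $\F$ to $\widetilde{\F}_m$ outside $B_1(0)$ (hence outside $B_2(0)$) via Fubini and differentiation under the integral sign, with the same structural constants. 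Applying {\bf{(F4)}} analogously gives
\[
|D_{x\xi}\widetilde{\F}_m(x,\xi)| \;\leq\; k_m(x)\,|\xi|^{p-1},
\qquad k_m := k \ast \rho_m \in C^\infty(\Omega'),
\]
and $k_m \to k$ in $L^s(\Omega')$ by classical mollification; the bound $\mathbf{(\tilde{F}4)}$ follows after using $|\xi|^{p-1} \leq (1+|\xi|)^{p-1} \leq 2^{p-1}|\xi|^{p-1}$ on $\{|\xi|\geq 1\}$. Pointwise convergence $\widetilde{\F}_m \to \F$ a.e. is standard for locally bounded Carath\'eodory integrands.

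The radial term $\sigma_m(1+|\xi|^2)^{p/2}$ is $C^\infty$, radial and convex in $\xi$, with Hessian of size $\simeq \sigma_m(1+|\xi|)^{p-2}$. This immediately gives the global lower bound $\mathbf{(\tilde{F}_m2)}$ with $\mu_m \simeq \sigma_m > 0$. For $\mathbf{(\tilde{F}_m3)}$ one combines this with the $C^2$ upper bound for $\widetilde{\F}_m$ on $\overline{B_2(0)}$ (depending on $m$ through the mollification). The global mixed-derivative bound $\mathbf{(\tilde{F}_m4)}$ on $\overline{B_2(0)}$ follows from the $C^\infty$ regularity of $\widetilde{\F}_m$ in $x$ on the compact $\xi$-set $\overline{B_2(0)}$, with some $\Lambda_m$ depending on $m$.

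The main subtlety, and the reason for introducing two separate sets of bounds, is to preserve $\mathbf{(\tilde{F}2)}$--$\mathbf{(\tilde{F}3)}$ with constants $\tilde\nu, \tilde L_1$ independent of $m$ for $|\xi|\geq 2$. Taking any null sequence $\sigma_m \downarrow 0$ (for instance $\sigma_m = 1/m$) makes the perturbation's Hessian contribution $\sigma_m(1+|\xi|)^{p-2}$ negligible compared to $\nu |\xi|^{p-2}$ and $L_1 |\xi|^{p-2}$ in the asymptotic region $\{|\xi|\geq 2\}$, so that $\tilde\nu$ and $\tilde L_1$ depend only on $\nu, L_1, p$. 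This uniform asymptotic ellipticity is precisely what will make the apriori estimate of Theorem \ref{t:Apriori2} applicable to each $\F_m$ with constants independent of $m$, enabling passage to the limit in the proof of Theorem \ref{eq121}.
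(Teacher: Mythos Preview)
Your two-step scheme (mollify in $x$, then add $\sigma_m(1+|\xi|^2)^{p/2}$) captures the right spirit, and the passage from {\bf(F4)} to $\mathbf{(\tilde F4)}$ via $k_m = k*\rho_m$ is exactly what is needed. However, there is a genuine gap in the verification of $\mathbf{(\tilde F_m3)}$.

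The difficulty is that mollification acts only in $x$ and does nothing to the $\xi$-regularity. Since $D_{\xi\xi}\widetilde\F_m(x,\xi) = \int D_{\xi\xi}\F(x-y,\xi)\,\rho_m(y)\,dy$, a bound of the form $|D_{\xi\xi}\widetilde\F_m(x,\xi)| \leq \gamma_m (1+|\xi|)^{p-2}$ on $\{|\xi|\leq 2\}$ would require an a priori $L^\infty$ (or at least $L^1_{\loc}$) bound on $x\mapsto \sup_{|\xi|\leq 2}|D_{\xi\xi}\F(x,\xi)|$. No such bound is among the hypotheses: {\bf(F3)} holds only for $|\xi|> R$, and convexity together with the two-sided growth {\bf(F0)} controls $D_\xi\F$ on compact $\xi$-sets but not $D_{\xi\xi}\F$. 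In fact, the paragraph preceding {\bf(F0)}--{\bf(F4)} only asserts that $\F(x,\cdot)$ is $C^2$ on $\R^{nN}\setminus B_{\bar R}(0)$, in which case your $\widetilde\F_m$ need not even be $C^2$ in $\xi$ on $B_{\bar R}$. The additive term $\sigma_m(1+|\xi|^2)^{p/2}$ repairs the lower bound $\mathbf{(\tilde F_m2)}$ but cannot cure a Hessian that is unbounded from above. Without $\mathbf{(\tilde F_m3)}$, Theorem~\ref{acefus} does not apply to the approximants, and the approximation argument in the proof of Theorem~\ref{eq121} breaks down.

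The construction the paper invokes (taken from \cite{CCG, CupGuiMas}) avoids this by modifying $\F$ also in the $\xi$-variable on the degeneracy region: one builds an integrand that agrees with $\F$ (through its radial profile $F$) for $|\xi|\geq 2$ and is glued, via a smooth cutoff in $|\xi|$, to a standard uniformly elliptic model such as a small multiple of $(1+|\xi|^2)^{p/2}$ on $\{|\xi|\leq 2\}$; only afterwards does one mollify in $x$. It is this replacement on the ball, rather than a mere additive perturbation, that yields a Hessian bounded above and below (with $m$-dependent constants) on all of $\R^{nN}$ while leaving the constants $\tilde\nu,\tilde L_1$ untouched in the exterior region $\{|\xi|\geq 2\}$.
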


\noindent We now recall a regularity result for minimizers of
functionals of the form
$$\inf_{w}\int_{\Omega}(\F(x, Dw)+\arctan(|w-\bar{u}|^2))\,dx$$
where $\F$ has standard growth conditions and smooth dependence on
the $x$-variable, and $\bar{u}$ is fixed. In absence of the
perturbation term $\arctan(|w -\bar{u} |^2)$, this regularity result
is well known. We refer to \cite{giamod86} for the   higher
differentiability result, and to \cite[Theorem 1.1]{CupGuiMas} as
far as  the Lipschitz continuity
 of the local minimizers is concerned. In presence of the perturbation term, the proofs can be easily adapted because of the boundedness of
 the function $\arctan(|w-\bar{u}|^2)$ and of its derivative with respect to the variable $w$, thus obtaining the following

\begin{theor}\label{acefus}
Let   $\G:\Omega\times\R^{n\times N}\to [0,+\infty)$, $\G\in
C^2(\Omega\times \R^{n\times N})$, and define  the functional
\begin{equation}\label{funzionale-arctg}
\int_{\Omega} \Big(\G(x, Dw)+\arctan(|w-\bar{u}|^2)\Big) \,dx
\end{equation}
with  $\bar{u}\in C^2(\Omega;\R^N)$. Assume that there exists $p\ge
2$ such that for every $x\in \Omega$ and every $\xi,\lambda \in
\R^{n\times N}$,
\[c_1|\xi|^p-c_2\le \G(x,\xi)\le L(1+|\xi|)^{p},\]
\[\nu\,(1+|\xi|)^{p-2}|\lambda|^2\le \langle D_{\xi\xi} \G(x,\xi)\lambda,\lambda\rangle,\]
 \[|D_{\xi \xi}\G(x,\xi)|\le L_1\,(1+|\xi|)^{p-2},\]
\[|D_{\xi x}\G(x,\xi)|\le K(1+|\xi|)^{p-1},\]
with positive constants $c_1,c_2,L,L_1,\nu,K$. Then any local
minimizer  $v\in W^{1,p}_{\loc}(\Omega)$ of \eqref{funzionale-arctg} is in $W_{
\mathrm{loc}}^{2,2}(\Omega;\mathbb{R}^N)$ and
\[(1+|Dv|^2)^{\frac{p-2}{2}}|D^2 v|^2\in L_{ loc}^1(\Omega).\] Moreover, if there exists  $G:\Omega\times [0,+\infty)\to [0,+\infty)$  such that $\G(x,\xi)=G(x,|\xi|)$, then one also has $v\in W^{1,\infty}_{ \mathrm{loc}}(\Omega;\R^N)$.
\end{theor}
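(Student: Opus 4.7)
The strategy is to adapt the classical proofs of the two cited results — \cite{giamod86} for the second order Sobolev bound, and \cite[Theorem 1.1]{CupGuiMas} for the Lipschitz estimate under the radial structure — by showing that the additive perturbation $\arctan(|w-\bar u|^2)$ behaves as a harmless lower order term. The crucial observation is that, setting $g(x,w):=\arctan(|w-\bar u(x)|^2)$, both $g$ and its partial derivatives $g_{w^\alpha}$ are bounded on $\Omega\times\R^N$, and for $\bar u\in C^2$ the map $x\mapsto g_{w^\alpha}(x,w)$ is $C^1$ with bounded spatial derivatives on bounded sets. Consequently the Euler--Lagrange equation satisfied by a local minimizer $v$ reads
\begin{equation*}
\int_\Omega \langle D_\xi\F(x,Dv),D\varphi\rangle\,dx+\int_\Omega g_w(x,v)\cdot\varphi\,dx=0,\qquad\varphi\in C_c^\infty(\Omega;\R^N),
\end{equation*}
with $g_w(x,v)\in L^\infty$. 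So, compared with the unperturbed Euler--Lagrange equation, we pay only an extra bounded right hand side, which may be absorbed into the existing right hand sides exactly in the same way as is done with $f$ in the proof of Theorem \ref{Apriori}.

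For the $W^{2,2}_{\text{loc}}$ part, the plan is to apply the Nirenberg difference quotient technique. Fix a cut-off $\eta\in C_c^\infty(\Omega)$, a direction $e_s$ and a small real parameter $h$, and test the Euler--Lagrange equation with $\varphi=\tau_{-h,s}(\eta^2\,\tau_{h,s}v)$, where $\tau_{h,s}$ denotes the finite difference operator. Using the ellipticity $\mathbf{(\tilde F_m 2)}$-type inequality $\nu(1+|\xi|)^{p-2}|\lambda|^2\le\langle D_{\xi\xi}\F\lambda,\lambda\rangle$, the bound on $D_{\xi\xi}\F$, the growth $|D_{\xi x}\F|\le K(1+|\xi|)^{p-1}$ and finally the boundedness of $g_w$, a standard Young's inequality yields the Caccioppoli-type estimate
\begin{equation*}
\int_{B_\rho}(1+|\tau_{h,s}Dv|^2)^{\frac{p-2}{2}}\,\Bigl|\tfrac{\tau_{h,s}Dv}{h}\Bigr|^2\,dx\le C\int_{B_R}\bigl(1+|Dv|^p\bigr)\,dx,
\end{equation*}
uniformly in $h$, with $C$ depending only on $n,p,\nu,L_1,K,\mathrm{dist}(B_R,\partial\Omega)$ and $\|g_w\|_\infty$. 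Letting $h\to 0$ produces $(1+|Dv|^2)^{(p-2)/2}|D^2v|^2\in L^1_{\text{loc}}$, and in particular $v\in W^{2,2}_{\text{loc}}$.

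For the Lipschitz conclusion under the radial structure $\F(x,\xi)=F(x,|\xi|)$, the strategy mirrors Step 1 of the proof of Theorem \ref{Apriori}. Knowing now that $v\in W^{2,2}_{\text{loc}}$, we differentiate the Euler--Lagrange equation in the direction $x_s$ in the weak sense and test with $\eta^2\Psi(|Dv|^2)\,D_{x_s}v$ for an appropriate nondecreasing function $\Psi$ (here with the $+1$ shift present in the growth, rather than the $(|Dv|-1)_+$ shift). The radial structure ensures the sign condition that makes the analogue of $I_3$ nonnegative. The extra summand
\begin{equation*}
\int_\Omega D_{x_s}g_w(x,v)\cdot\eta^2\Psi(|Dv|^2)\,D_{x_s}v\,dx+\int_\Omega g_{ww}(x,v)\cdot D_{x_s}v\,\eta^2\Psi(|Dv|^2)\,D_{x_s}v\,dx,
\end{equation*}
arising from the perturbation, is bounded by $C\int \eta^2\Psi(|Dv|^2)(1+|Dv|^2)\,dx$ using the boundedness of $g_w,g_{ww}$ and $D_x g_w$; this is of the same type as the $|D\eta|^2$ term and is again absorbed. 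Combining this Caccioppoli inequality with the Sobolev embedding, as in Steps 2 and 3 of Theorem \ref{Apriori}, gives by Moser iteration a uniform $L^\infty$ bound for $Dv$ on compact subsets in terms of $\int(1+|Dv|^p)$.

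The main obstacle, as always in these degenerate settings, is checking that all the manipulations above are admissible for a function $v$ that is only known a priori to lie in $W^{1,p}_{\text{loc}}$. This is resolved by the usual quadratic perturbation: replace $\F(x,\xi)$ by $\F_\epsilon(x,\xi):=\F(x,\xi)+\epsilon(1+|\xi|^2)^{p/2}$, obtain the above estimates for the (now nondegenerate) minimizers $v_\epsilon$ with constants independent of $\epsilon$, and pass to the limit $\epsilon\to 0$ using the uniqueness of the minimizer of the perturbed strictly convex functional and lower semicontinuity. The $\arctan$ term poses no difficulty in this limit procedure because it is continuous with respect to $L^2$ convergence of $w$ and uniformly bounded.
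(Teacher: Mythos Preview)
Your proposal is correct and matches the paper's approach exactly: the paper does not give a detailed proof of this theorem but simply remarks, immediately before the statement, that the result follows from \cite{giamod86} (for the $W^{2,2}_{\mathrm{loc}}$ and weighted second-order bound) and \cite[Theorem 1.1]{CupGuiMas} (for the Lipschitz estimate under the radial structure), the $\arctan$ perturbation being harmless because both $g(x,v)=\arctan(|v-\bar u|^2)$ and its derivatives $g_{v^\alpha}$ are bounded. Your plan is precisely a fleshed-out version of this remark; the only superfluous step is the final quadratic regularization $\F_\epsilon=\F+\epsilon(1+|\xi|^2)^{p/2}$, which is not needed here since the hypotheses of Theorem \ref{acefus} already give non-degenerate ellipticity $\nu(1+|\xi|)^{p-2}|\lambda|^2\le\langle D_{\xi\xi}\F\,\lambda,\lambda\rangle$ for \emph{all} $\xi$, so the difference-quotient and Moser arguments apply directly to $v$.
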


\noindent We are now ready for proving Theorem \ref{thmain1}.

\begin{proof}[Proof of Theorem \ref{thmain1}]
Let $u\in W^{1,p}_{\loc}(\Omega)$ be a local minimizer of the
functional $\mathbb{F}(u,\Omega)$, and let $B_{r}\subset\Omega$ be a
fixed ball.  We consider the sequence of energy densities
$\F_m(x,\xi)$ obtained after applying Proposition
\ref{apprcupguimas2parte} to the integrand $\F$. For a standard
sequence of mollifiers $\rho_\varepsilon$, we set
$u_{\varepsilon}=u\ast\rho_{\varepsilon}$,
$f_{\varepsilon}=f\ast\rho_{\varepsilon}$ and define
\begin{align*}
{\mathbb F}_{\varepsilon,m}(w,B_r):=\int_{B_{r}}
\Big(\F_m(x,Dw)+f_{\varepsilon}(x)w+\arctan
|w-u_{\varepsilon}|^{2}\Big)\,dx.
\end{align*}
The lower semi-continuity and strict convexity of ${\mathbb
F}_{\varepsilon,m}$ with respect to the gradient variable ensure that the minimization problem
\begin{align}\label{eq104}
\min\bigg\{{\mathbb F}_{ \varepsilon, m  }(w;B_{r}):w\in
u+W^{1,p}_{0}(B_{r},\R^N)\bigg\}
\end{align}
has a unique solution $v_{\varepsilon,m}\in
u+W^{1,p}_{0}(B_{r},\R^N)$. By the growth conditions of  $\F_m$ stated in  $\mathbf{(\tilde{F}0)}$ of Proposition
\ref{apprcupguimas2parte} and
the minimality of $v_{\varepsilon,m}$, we deduce that $$
\aligned \tilde\ell\int_{B_{r}}|Dv_{\varepsilon,m}|^{p}\,dx
&\leq \int_{B_{r}}\big(\tilde \ell+ \F_{m} (x,Dv_{\varepsilon,m})\big)\,dx\\
&\leq \,
{\mathbb F}_{\varepsilon,m}(v_{\varepsilon,m},B_r)+\int_{B_{r}}\big(\tilde\ell-f_{\varepsilon}(x)\cdot v_{\varepsilon,m}\big)\,dx\\
&\leq {\mathbb F}_{\varepsilon,m}(u, B_r)+\int_{B_{r}}\Big(\tilde	\ell-f_{\varepsilon}(x)\cdot v_{\varepsilon,m}\big)\,dx\\
&= \int_{B_{r}}\bigg( \F_{m} (x,Du)+\arctan |u-u_{\varepsilon}|^{2}\bigg)\,dx-\int_{B_{r}}f_{\varepsilon}(x)(v_{\varepsilon,m}-u)\,dx +\tilde\ell|B_r|\\
&\leq
\int_{B_{r}}\bigg(\tilde L|Du|^{p}+\dfrac{\pi}{2}\bigg)\,dx+\int_{B_{r}}|f_{\varepsilon}(x)||v_{\varepsilon,m}-u|\,dx+(\tilde\ell+\tilde L)|B_r|.
\endaligned
$$
Hence
$$\int_{B_{r}}|Dv_{\varepsilon,m}|^{p}\,dx
\leq C\int_{B_{r}}|Du|^{p}\,dx+\int_{B_{r}}|f_{\varepsilon}(x)||v_{\varepsilon,m}-u|\,dx+C|B_r| $$
with a constant $C=C(\tilde \ell,\tilde L)$
independent of $m$ and  of $\varepsilon$.
\\
Using  Young's   and Poincar\'e inequalities in the last integral of previous estimate, we get
$$
\aligned \int_{B_{r}}|&Dv_{\varepsilon,m}|^{p}\,dx
\leq C\int_{B_{r}}\big(1+|Du|^{p}\big)\,dx+c_{\kappa}\int_{B_{r}}|f_{\varepsilon}(x)|^{p^{\prime}}\,dx+\kappa\int_{B_{r}}|v_{\varepsilon,m}-u|^{p}\,dx\\
&\leq C\int_{B_{r}} \big(1+|Du|^{p}\big)\,dx+ c_{\kappa} \int_{B_{r}}|f_{\varepsilon}(x)|^{p^{\prime}}\,dx+ \kappa \,c_{n,p,r} \int_{B_{r}}|Dv_{\varepsilon,m}-Du|^{p}\,dx,\\
&\leq
C\int_{B_{r}}\big(1+|Du|^{p}\big)\,dx+c_{\kappa}\int_{B_{r}}|f_{\varepsilon}(x)|^{p^{\prime}}\,dx+\kappa\,
c_{n,p,r}\int_{B_{r}}|Du|^{p}\,dx+ \kappa\,
c_{n,p,r}\int_{B_{r}}|Dv_{\varepsilon,m}|^{p}\,dx,
\endaligned
$$
Choosing $\kappa=\frac{1}{2 c_{n,p,r}}$, we can reabsorb the last
integral in the right hand side of previous inequality by the left
hand side, getting
$$\int_{B_{r}}|Dv_{\varepsilon,m}|^{p}\,dx\le C\int_{B_{r}}\big(1+|Du|^{p}\big)\,dx
+C\int_{B_{r}}|f_{\varepsilon}(x)|^{p^{\prime}}\,dx$$ for a constant
$C$ independent of $\varepsilon$ and $m$ and so with right hand side
independent of $m$. 
Moreover, since $p^{\prime}\le 2\le n$, the strong convergence of $f_\varepsilon$
to  $f$  in $L^n\log^\alpha L$, implies that
$$\int_{B_{r}}|f_{\varepsilon}(x)|^{p'}\,dx\leqslant C,$$ with $C$
independent of $\varepsilon$. Therefore
\begin{equation}\label{boun}
	\int_{B_{r}}|Dv_{\varepsilon,m}|^{p}\,dx\le C
\end{equation}
with a constant
$C$ independent of $\varepsilon$ and $m$.
Hence, by weak compactness, we deduce that there exists $v_{\varepsilon}\in u+W^{1,p}_{0}(B_{r};\R^N)$  such that 
\begin{equation}\label{weak1}
\{v_{\varepsilon,m}\}_m\rightharpoonup v_\varepsilon\qquad\qquad\text{weakly  in }\,\,W^{1,p}(B_r)	
\end{equation} 
and
\begin{equation}\label{strong1}
\{v_{\varepsilon,m}\}_m\to v_\varepsilon\qquad\qquad\text{strongly  in }\,\, L^{p}(B_r)
\end{equation} 
as $m\rightarrow
+\infty$, up to a subsequence. Set now
\begin{align*}
{\mathbb F}_{\varepsilon}(w,B_r):=\int_{B_r}  \Big(\F
(x,Dw)+f_{\varepsilon}(x)w+\arctan (|w-u_{\varepsilon}|^{2})\Big)\,\,dx.
\end{align*}
For every fixed $\varepsilon>0$, one can see that the functionals ${\mathbb
F}_{\varepsilon,m}$ $\Gamma$-converge to ${\mathbb F}_{\varepsilon}$ as
$m\to\infty$ (see Theorem 5.14 and Corollary 7.20 in
\cite{DM:GammaConv}). As a consequence, $v_{\varepsilon}$ is a
minimizer of ${\mathbb F}_{\varepsilon}$. Now, the weak lower semicontinuity
of the $L^{p}$  norm and \eqref{boun} imply that
\begin{equation}\label{bound1}
\int_{B_{r}}|Dv_{\varepsilon}|^{p}\,dx \le \liminf_{m\to\infty}\int_{B_{r}}|Dv_{\varepsilon,m}|^{p}\,dx \leqslant C.
\end{equation}
Since the constant $C$ in \eqref{bound1} is independent of $\varepsilon$, as before, by compactness there exists $v\in
u+W^{1,p}_{0}(B_{r};\R^N)$ such that 
\begin{equation}\label{weak2}
\{v_{\varepsilon}\}_\varepsilon\rightharpoonup v\qquad\qquad\text{weakly  in }\,\,W^{1,p}(B_r)	\end{equation} 
and
\begin{equation}\label{strong2}
\{v_{\varepsilon}\}_\varepsilon\to v\qquad\qquad\text{strongly  in }\,\, L^{p}(B_r)
\end{equation} 
as $\varepsilon\to 0$, up to a subsequence. Also, again by the weak lower
semicontinuity of the norm,
\begin{equation}\label{bound2}
\int_{B_{r}}|Dv|^{p}\,dx\le\liminf\limits_{\varepsilon\rightarrow
0}\int_{B_{r}}|Dv_{\varepsilon}|^{p}\,dx \leqslant C.
\end{equation}
Observe that, as $\varepsilon\to 0$, the functionals ${\mathbb
F}_{\varepsilon}$ $\Gamma$-converge to
\begin{align*}
{\mathbb F}_0(w,B_r):=\int_{B_r}\Big(\F(x,Dw)+f(x)\cdot w+\arctan
(|w-u|^{2})\Big)\,dx,
\end{align*}
whence $v$ is a minimizer of ${\mathbb F}_0$ coinciding with $u$ on $\partial B_r$, and therefore
${\mathbb F}_0(v,B_r)\leq {\mathbb F}_0(u,B_r)$. This, together with
the minimality of $u$, implies that
\begin{equation*}
{\mathbb F}(u,B_r) \leq {\mathbb F}(v, B_r) \leq {\mathbb F}_0(v,
B_r)\leq {\mathbb F}_0(u,B_r)={\mathbb F}(u, B_r).\end{equation*}
Hence all the above inequalities hold true as equalities, and as a consequence
$$
\int_{B_r}\arctan |u-v|^{2}\,dx=0,\qquad\Rightarrow \qquad u=v\qquad
\text{a.e. in }B_r.
$$
Since the functionals ${\mathbb F}_{\varepsilon,m}$ satisfy the
assumptions of Theorem \ref{acefus} for every $\varepsilon$ and $m$,
their minimizers $v_{\varepsilon,m}$ belong to
$W^{2,2}_{\loc}\cap W^{1,\infty}$. Therefore we are legitimate to use
the a priori estimate at \eqref{apr} of  Theorem
\ref{t:Apriori2}, thus getting
\begin{align}
\label{conv} \sup\limits_{B_{\rho}}|Dv_{\varepsilon,m}|\leqslant
C\exp\big( C(||k_\varepsilon||_{L^n\log^\alpha L}+||f_\varepsilon||_{L^n\log^\alpha L})^{\vartheta}\big)\bigg(\int_{B_{
\rho' }}(1+  |Dv_{\varepsilon,m}|^p) \,dx\bigg)^{\frac{1}{p}},
\end{align}
for every $B_\rho\subset B_{\rho'}\Subset B_r$ and for a constant
$C$ independent of $\varepsilon$ and $m$ by the expression at \eqref{constant}. By  virtue of
\eqref{bound1} and \eqref{bound2}, and since $k_\varepsilon \to k$ and
$f_\varepsilon\to f$ strongly in $L^n\log^\alpha L$, passing to the limit, first
as $m\rightarrow +\infty$, and then as $\varepsilon\rightarrow 0$ in
estimate \eqref{conv}, we conclude that
\begin{align*}
\sup\limits_{B_{\rho}}|Du|\leqslant
C\exp\big( C(||k||_{L^n\log^\alpha L}+||f||_{L^n\log^\alpha L})^{\vartheta}\big)\bigg(\int_{B_{
\rho'}}(1+|Du|^{p})\,dx\bigg).
\end{align*}
This finishes the proof.
\end{proof}

\section{Proof of Theorem \ref{thmain2}}\label{ssobolev}

\noindent In this section, we establish the integrability of second
order distributional derivatives of the local minimizers of the
functional $\mathbb{F}(\cdot,\Omega)$.  To this aim, we will need a measure theory tool, that is inspired by a result in \cite{CGGP}.
The result we present here goes a bit further, as it states
convergence on the set $\{|Df|>1\}$, and not only on $\{|Df|>t\}$
for each $t>1$ and this improvement is due to the Lipschitz
regularity of the minimizers proven in Theorem \ref{thmain1}.  The precise statement is
as follows.

\begin{prop}
        \label{anto}
Let  $p\ge 2$, $N\ge 1$, and let $f_k, f\in
W^{1,p}(\Omega;\mathbb{R}^N)$ be given, and denote
$P_k=(|Df_k|-1)_+$. Assume that:
\begin{itemize}
\item[(a)] $f_k\rightharpoonup f$ in $W^{1,p}(\Omega;\mathbb{R}^N)$,
\item[(b)] $P_k\in L^\infty$ and there exists a positive constant $M$ independent of $k$ such that
\begin{equation}\label{uniformly}\|P_k\|_{L^{\infty}(\Omega)}\leq M\end{equation}
for every  $k\in\mathbb{N}$ .
\item[(c)]
 Assume that there exists a positive constant $N$ independent of $k$ such that
 \begin{equation}\label{0.1}\int_{\Omega} \frac{P_k^p}{(1+P_k)^2}\,|DP_k|^2\,dx\leq N\end{equation}
 for every  $k\in\mathbb{N}$.\end{itemize}
Then one has $(|Df|-1)_+^{\frac{p}{2}+1}\in  W^{1,2}(\Omega)$.
Moreover, there exists a not relabeled  subsequence $f_{k}$ such
that
\[\displaystyle \lim_{k\to \infty}|Df_{k}|=  |Df| \quad \text{strongly in $L^{p+2}\Big(\Omega\cap \{|Df|>1\}\Big)$  },\]
and \[\displaystyle \lim_{k\to \infty}|Df_{k}|=  |Df|\quad
\text{a.e. in}\ \Omega\cap \{|Df|> 1\}.\]
\end{prop}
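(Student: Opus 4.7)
The strategy is to extract $W^{1,2}$-compactness for the auxiliary sequence $P_k^{p/2+1}$, identify its limit, and deduce the claimed convergences.

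First, I would use the $L^\infty$ bound (b) to remove the denominator $(1+P_k)^{-2}\ge(1+M)^{-2}$ from (c), giving $\int_\Omega P_k^p|DP_k|^2\,dx\le(1+M)^2 N$. Since $D(P_k^{p/2+1})=(\tfrac{p}{2}+1)P_k^{p/2}\,DP_k$, this is precisely a uniform $L^2$-bound on the distributional gradient of $P_k^{p/2+1}$. Combined with the trivial $L^\infty$-bound $P_k^{p/2+1}\le M^{p/2+1}$, the sequence $\{P_k^{p/2+1}\}$ is bounded in $W^{1,2}(\Omega)$. By Rellich--Kondrachov together with weak compactness, a subsequence (not relabeled) converges $P_k^{p/2+1}\rightharpoonup G$ in $W^{1,2}$, strongly in $L^2$, and pointwise a.e. Setting $P:=G^{2/(p+2)}\in L^\infty$, I obtain $P_k\to P$ a.e., and by dominated convergence (using (b)) also in $L^q(\Omega)$ for every finite $q$, in particular in $L^{p+2}$.

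Next I would obtain one direction of the identification $P=(|Df|-1)_+$. The uniform bound $|Df_k|\le 1+M$ from (b) upgrades (a) to $Df_k\rightharpoonup Df$ weakly in every $L^q$, $q<\infty$. Weak lower semicontinuity applied to the convex integrand $\xi\mapsto(|\xi|-1)_+^{p/2+1}$ tested against any non-negative function gives $(|Df|-1)_+^{p/2+1}\le G$ a.e., hence $(|Df|-1)_+\le P$ a.e. In particular, $\{|Df|>1\}\subseteq\{P>0\}$ modulo null sets; and on $\{P>0\}$ the pointwise limit $P_k\to P>0$ forces $|Df_k|=1+P_k$ eventually, so $|Df_k|\to 1+P$ a.e. on $\{P>0\}$ and, by dominated convergence, in $L^{p+2}(\{P>0\})$.

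The main obstacle is the reverse inequality $P\le(|Df|-1)_+$ a.e. on $\{|Df|>1\}$, equivalently $|Df|=1+P$ there. Factoring $Df_k=|Df_k|\,\omega_k$ with $\omega_k=Df_k/|Df_k|$ and $|\omega_k|=1$, a further subsequence gives $\omega_k\rightharpoonup^\ast\omega$ in $L^\infty$. The product of the strongly $L^2$-convergent scalar $|Df_k|\to 1+P$ with the weakly-$\ast$ $L^\infty$-convergent unit field $\omega_k$ yields $Df_k\rightharpoonup(1+P)\,\omega$ in $L^2(\{P>0\})$; by uniqueness of weak limits, $Df=(1+P)\,\omega$ a.e.\ there, with $|\omega|\le 1$. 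Ruling out $|\omega|<1$ on $\{|Df|>1\}$, which would correspond to genuine oscillation of the directions $\omega_k$, is the delicate point: one must exploit the curl-free structure of $Df_k$ together with the second-order control implicit in (c). Indeed, on $\{P_k>0\}$ the quantity $DP_k$ encodes the tangential part of $D^2f_k$ along $Df_k$, and its weighted $L^2$ bound is what prevents directional oscillations of $\omega_k$ on subsets where $|Df|$ is bounded away from $1$. This is precisely the improvement over \cite{CGGP}, where the argument was confined to the sublevel sets $\{|Df|>t\}$ with $t>1$; the extension down to $t=1$ here leans on the Lipschitz bound of Theorem \ref{eq121}.

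With the identification $P=(|Df|-1)_+$ on $\{|Df|>1\}$ in hand, the a.e.\ and strong $L^{p+2}$ convergences of $|Df_k|$ to $|Df|$ on $\{|Df|>1\}$ follow immediately from the second paragraph. The regularity $(|Df|-1)_+^{p/2+1}\in W^{1,2}(\Omega)$ is then obtained by noting that this function coincides with $G\in W^{1,2}(\Omega)$ on $\{|Df|>1\}$ and vanishes on its complement, and invoking a standard truncation argument at the level set $\{|Df|=1\}$.
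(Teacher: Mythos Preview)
Your opening moves---removing the denominator via (b), extracting a $W^{1,2}$-weakly convergent subsequence from $P_k^{(p+2)/2}$, and deducing $P_k\to P:=G^{2/(p+2)}$ strongly in $L^{p+2}$---coincide with the paper. The paper then diverges from you: rather than invoking lower semicontinuity of $\xi\mapsto(|\xi|-1)_+^{(p+2)/2}$, it shows directly (splitting over $\{|Df_k|\le1\}$ versus $\{|Df_k|>1\}$ and using convergence in measure) that $|Df_k|\to1+P$ in $L^{p+2}(\{P\ge t\})$ for each $t>0$, then passes $t\downarrow0$ by monotone exhaustion, using the $L^\infty$ bound (b) only at this last step. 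Having $|Df_k|\to1+P$ strongly on $\{P>0\}$, the paper simply asserts, ``by the essential uniqueness of the weak limit'', that $|Df|=1+P$ there and hence $\{P>0\}=\{|Df|>1\}$. No curl-free structure or second-order argument is invoked.

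You are right to flag the identification $|Df|=1+P$ on $\{|Df|>1\}$ as the crux, but the mechanism you propose cannot close the gap: hypotheses (a)--(c) do not prevent directional oscillation of $Df_k$. Take $n=N=1$, $\Omega=(0,1)$, and $f_k$ piecewise linear with $f_k'=+3$ on three quarters of each period of length $1/k$ and $f_k'=-3$ on the remaining quarter. Then $|Df_k|\equiv3$, $P_k\equiv2$, $DP_k\equiv0$, so (b) and (c) hold trivially, while $f_k\rightharpoonup f(x)=\tfrac32 x$ in $W^{1,p}$; yet $|Df|=\tfrac32\in(1,3)$ everywhere, so $|Df_k|=3\not\to|Df|$ on $\{|Df|>1\}=\Omega$, and $|\omega|=\tfrac12<1$ there. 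Here $DP_k=0$ carries no second-order information whatsoever and the curl-free constraint is vacuous in one dimension, so neither ingredient you cite can help. (The same example also refutes the paper's stronger assertion $\{P>0\}=\{|Df|>1\}$.) The upshot is that the proposition, read as an abstract statement about arbitrary $W^{1,p}$ sequences, seems to require an additional input---for instance a.e.\ convergence of $Df_k$, or an energy-convergence argument---which is available for the minimizers $v_{\varepsilon,m}$ in Theorem~\ref{eq134} but is not encoded in (a)--(c) alone.
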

\begin{proof}
First, it is immediate to see that
$$\aligned
\int_\Omega \left|D\left(P_k^{\frac{p+2}2}\right)\right|^2
&=c(p)\int_\Omega P_k^p \, |D P_k|^2=c(p)\int_\Omega \frac{P_k^p}{(1+P_k)^2} \, (1+P_k)^2|D P_k|^2\\
&\leq c(p,M)\int_\Omega \frac{P_k^p}{(1+P_k)^2}\,|DP_k|^2\leq c(p,M,N),
\endaligned$$
where we used assumptions $(b)$ and $(c)$.
By compactness, there exists $\varphi\in W^{1,2}(\Omega)$ such that
$P_k^{\frac{p+2}2}\rightharpoonup \varphi$ weakly in $W^{1,2}(\Omega)$ and  strongly  in $L^2(\Omega)$. As a consequence,
$\varphi\geq 0$ almost everywhere. Using also that $r\mapsto
r^\frac{2}{2+p}$ is $\frac{2}{2+p}$-H\"older continuous on
$[0,\infty)$,  we can deduce that
$$
\int_\Omega \Big|P_k-\varphi^\frac2{p+2}\Big|^{p+2}\leq c(p)\int_\Omega
\Big|P_k^\frac{p+2}{2}-\varphi\Big|^2
$$
and therefore $P_k\to \varphi^\frac2{p+2}$ strongly in $L^{p+2}(\Omega)$  and  also in measure.
From now on, let us denote $P=\varphi^\frac{2}{p+2}$ 
%and for each
%$t\geq 0$, consider the set
%$$\aligned
%A(t)&=\{P>t\}.
%\endaligned$$
%The integrability of  $P$ guarantees that $t\mapsto |A(t)|$ is a
%right-continuous functions on $[0,\infty)$. Moreover, since
%$\|P_k-P\|_{p+2}\to 0$ as $k\to\infty$,. Therefore,
and, recalling the definition of $P_k$, observe that
\begin{eqnarray*}
&&\lim_{k\to+\infty}\int_\Omega\big|P_k-P\big|^{p+2}\cr\cr
&=&\lim_{k\to+\infty}\int_{\{x\in\Omega:\,|Df_k|\le 1\}}|P|^{p+2}+
\lim_{k\to+\infty}\int_{\{x\in\Omega:\,|Df_k|> 1\}}\big||Df_k|-1-P\big|^{p+2}=0
\end{eqnarray*}
that, in particular,  implies
\begin{equation}\label{strong2bis}
%\lim_{k\to+\infty}\int_{\{|Df_k|\le 1\}}|P|^{p+2}=0\qquad\text{and}\qquad
\lim_{k\to+\infty}\int_{\{x\in\Omega:\,|Df_k|> 1\}}\big||Df_k|-1-P\big|^{p+2}=0.
\end{equation}
Moreover by the convergence in measure of $P_k$ to $P$, for every
$t>0$, we have
\begin{eqnarray*}
0&=&\lim_{k\to+\infty}|\{x\in \Omega    :\,\,
\big|\big(|Df_k|-1\big)_+-P|\ge t\}\big|\cr\cr &=&
\lim_{k\to+\infty}|\{x\in \Omega    :\,\, |Df_k|\le 1\,\,
\text{and}\,\,\big|P|\ge t\}\big|\cr\cr &+&\lim_{k\to+\infty}|\{x\in
\Omega :\,\, |Df_k|> 1\,\, \text{and}\,\,\big||Df_k|-1-P|\ge
t\}\big|,
\end{eqnarray*}
that, in particular, yields
\begin{eqnarray}\label{measure1}
0&=&\lim_{k\to+\infty}|\{x\in \Omega    :\,\, |Df_k|\le 1\,\,
\text{and}\,\,\big|P|\ge t\}\big|.
%0&=&\lim_{k\to+\infty}|\{x\in \Omega   :\,\, |Df_k|> 1\,\, \text{and}\,\,|Df_k|-1-P\ge \varepsilon\}\big|\cr\cr
%0&=&\lim_{k\to+\infty}|\{x\in \Omega   :\,\, |Df_k|> 1\,\, \text{and}\,\,|Df_k|-1-P\le -\varepsilon\}\big|
\end{eqnarray}
For $t>0$, we write
\begin{eqnarray}\label{split}
&&\int_{\{x\in\Omega:\,P\ge t\}}\big||Df_k|-1-P\big|^{p+2}\cr\cr
%&=&\int_{\{|Df_k|\le 1+t\,\, \text{and}\,\,P>t\}}\big||Df_k|-1-P\big|^{p+2}    +\int_{\{|Df_k|> 1+t\,\, \text{and}\,\,P>t\}}\big||Df_k|-1-P\big|^{p+2}\cr\cr
%&=&\int_{\{|Df_k|-P+P\le 1+t\,\, \text{and}\,\,P>t\}}\big||Df_k|-1-P\big|^{p+2}    +\int_{\{|Df_k|> 1+t\,\, \text{and}\,\,P>t\}}\big||Df_k|-1-P\big|^{p+2}\cr\cr
&=&\int_{\{|Df_k|\le 1\,\, \text{and}\,\,P\ge
t\}}\big||Df_k|-1-P\big|^{p+2}    +\int_{\{|Df_k|> 1\,\,
\text{and}\,\,P\ge t\}}\big||Df_k|-1-P\big|^{p+2}\cr\cr &=&
I_{1,k}+I_{2,k}.
\end{eqnarray}
By \eqref{strong2bis} we have that
\begin{eqnarray}\label{I2}
\lim_{k\to+\infty}I_{2,k} &\le &\lim_{k\to+\infty}\int_{\{|Df_k|>
1\}}\Big||Df_k|-1-P\Big|^{p+2}=0.
    \end{eqnarray}
Since by virtue of the assumption $(b)$, we have $||P||_\infty\le
c(M)$, we get
\begin{eqnarray}\label{I1}
\lim_{k\to+\infty}I_{1,k}&\le
&c(p)\lim_{k\to+\infty}\int_{\{|Df_k|\le 1\,\, \text{and}\,\,P\ge
t\}}(2+|P|)^{p+2}\cr\cr &\le &c(M,p)\lim_{k\to+\infty}|\{|Df_k|\le
1\,\, \text{and}\,\,P\ge t\}|=0, \end{eqnarray} by the equality in
\eqref{measure1}. Inserting \eqref{I1} and \eqref{I2} in
\eqref{split}, we conclude
\begin{equation}\label{strongGG}\lim_{k\to +\infty}\int_{\{P\ge t\}}\Big||Df_k|-1-P\Big|^{p+2}=0\end{equation}
for every $t>0$ .
 One can easily observe that
 $$B(0):=\{x\in \Omega\,:\,P>0\}=\bigcup_{n\in\mathbb{N}}\left\{x\in \Omega\,:\, P\ge\frac{1}{n}\right\}=:\bigcup_{n\in\mathbb{N}}B\left(\frac{1}{n}\right)$$
 and, since $B\left(\frac{1}{n}\right)\subset B\left(\frac{1}{n+1}\right)$,
 $$|\{x\in \Omega\,:\, P>0\}|=\lim_{n\to+\infty}\left|\left\{x\in \Omega\,:\, P\ge \frac{1}{n}\right\}\right|.$$
 Since $B\left(\frac{1}{n}\right)\subset B(0)$, for every $n\in \mathbb{N}$,
% i.e.
% \[\|\chi_{B(1/n)}\|_{L^{1}(\Omega)}\to \|\chi_{B(1)}\|_{L^{1}(\Omega)}. \]
% and, by the right continuity of the function $B(t)$ we deduce that
%%%where we used that  $\{(|Df_k|-\tau)_+>t\}=\{|Df_k|>\tau+t\}$.
\[\|\chi_{B(0)}-\chi_{B(1/n)}\|_{L^{1}(\Omega)}=\|\chi_{B(0)\setminus B(1/n)}\|_{L^{1}(\Omega)}=|B(0)- B(1/n)|\to 0. \]
 Therefore
\begin{eqnarray}\label{split1}
&&\int_{\{P>0\}}\big||Df_k|-1-P\big|^{p+2}\cr\cr
%&=&\int_{\{|Df_k|\le 1+t\,\, \text{and}\,\,P>t\}}\big||Df_k|-1-P\big|^{p+2}    +\int_{\{|Df_k|> 1+t\,\, \text{and}\,\,P>t\}}\big||Df_k|-1-P\big|^{p+2}\cr\cr
%&=&\int_{\{|Df_k|-P+P\le 1+t\,\, \text{and}\,\,P>t\}}\big||Df_k|-1-P\big|^{p+2}    +\int_{\{|Df_k|> 1+t\,\, \text{and}\,\,P>t\}}\big||Df_k|-1-P\big|^{p+2}\cr\cr
&=&\int_{\{P>0\}}\big||Df_k|-1-P\big|^{p+2} -\int_{\{P\ge
1/n\}}\big||Df_k|-1-P\big|^{p+2}\cr\cr &+&\int_{\{P\ge
1/n\}}\big||Df_k|-1-P\big|^{p+2}\cr\cr
&=&\int\big||Df_k|-1-P\big|^{p+2}(\chi_{B(0)}-\chi_{B(1/n)}) \cr\cr
&+&\int_{\{P\ge 1/n\}}\big||Df_k|-1-P\big|^{p+2}\cr\cr &\le &
C(M)\|\chi_{B(0)}-\chi_{B(1/n)}\|_{L^{1}(\Omega)}+\int_{\{P\ge
1/n\}}\big||Df_k|-1-P\big|^{p+2},
\end{eqnarray}
where we used assumption $(b)$.
Passing to the limit first as $k\to \infty$ and using
\eqref{strongGG} with $t=1/n$, and then as $n\to \infty$ in
\eqref{split1}, we conclude that
\begin{eqnarray}
\lim_{k\to \infty}\int_{\{P>0\}}\big||Df_k|-1-P\big|^{p+2}=0.
\end{eqnarray}
From previous equality we deduce that
 \begin{equation}\label{stronG}
|Df_k|\to P+1\qquad\text{strongly in }\,\,\, L^{p+2}({\{P>0\}})
\end{equation}
and, of course, modulo subsequences,  also  weakly and  almost
everywhere. By assumption,  $(f_k)$ is  weakly convergent in
$W^{1,p}(\Omega;\mathbb{R}^N)$ to  $f$, so, by the essential
uniqueness of the weak limit
 we conclude that
\begin{equation}\label{strongGGG}|Df|=P+1\qquad\qquad \text{a.e.\,\,in}\,\, {\{P>0\}}.\end{equation}
Therefore \[{\{P>0\}}=\{x\in \Omega\,:\, |Df|>1\}.\] By
\eqref{stronG} and \eqref{strongGGG} and  the equality above,
there exists a subsequence (not relabeled) of $f_k$ such that
\begin{equation}\label{strongG4}|Df_k|\displaystyle \to_{k\to \infty} |Df| \qquad \text{strongly in $L^{p+2}\Big(\{x\in \Omega\,:\, |Df|>1\}\Big)$}.
 \end{equation}
 The fact that $(|Df|-1)_+^{\frac{p}{2}+1}\in  W^{1,2}(\Omega)$ now easily follows by the semicontinuity of the $L^2$ norm, assumption \eqref{0.1} and \eqref{strongGGG}.
 
%
%
%
%
%
% Hence
%$$\lim_{k\to\infty}\chi_{A_k(t)}= \chi_{A(t)}\hspace{1cm}\text{almost everywhere}.$$
%This implies that
%$$\lim_{k\to\infty}\|\chi_{A_k(t)}-\chi_{A(t)}\|_1=0.$$
%Now, combine this with \eqref{uniformly} to get that
%$$
%\aligned
%\lim_{k\to\infty} \int(\chi_{A(t)}-\chi_{A_k(t)})||Df_k|-1-P|^{p+2}=0
%\endaligned
%$$
%Indeed, from \eqref{uniformly} and the a.e. convergence of $P_k$ towards $P$, we deduce that $P\in L^\infty$, whence
%$$\int(\chi_{A(t)}-\chi_{A_k(t)})||Df_k|-1-P|^{p+2}\leq C\,\int|\chi_{A(t)}-\chi_{A_k(t)}|$$
%and so the claim follows. This, together with the facts that
%$$\aligned
%\limsup_{k\to\infty}\int_{A_k(t)}||Df_k|-1-P|^{p+2}
%&=\limsup_{k\to\infty}\int_{A_k(t)} |P_k-P|^{p+2} \\
%&\leq \limsup_{k\to\infty}\int |P_k-P|^{p+2}=0
%\endaligned$$
%and that
%$$
%\aligned
%\int_{A(t)}||Df_k|-1-P|^{p+2}&=
%\int_{A_k(t)}||Df_k|-1-P|^{p+2}+\int(\chi_{A(t)}-\chi_{A_k(t)})||Df_k|-1-P|^{p+2}
%\endaligned
%$$
%gives  that
%$$\lim_{k\to\infty}\int_{A(t)}||Df_k|-1-P|^{p+2}
%=0$$
%that is, $|Df_k|$ converges strongly to $P+1$ in $L^{p+2}(A(t))$. It then follows that $P=|Df|-1$ almost everywhere on the set $A(0)=\{P>0\}=\{|Df|>1\}$. The Lemma now follows by simply recalling that $P_k$ converges to $P$ in $L^{p+2}$ and almost everywhere.
\end{proof}

\noindent We can now proceed with the proof of Theorem \ref{thmain2}.

\begin{proof}[Proof of theorem \ref{thmain2}]
Let $v_{\varepsilon,m}$ be the solution of the problem
\eqref{eq104}. The functionals $\mathbb{F}_{\varepsilon,m}$,  for
every $\varepsilon$ and $m$, satisfy the assumptions of Theorem
\ref{acefus}. Therefore their minimizers $v_{\varepsilon,m}$ belong
to $W^{2,2}_{\loc}\bigcap W^{1,\infty}$ and are such that
$|Dv_{\epsilon,m}|^{p-2}|D^2v_{\varepsilon,m}|^2\in  L^1$. As a
consequence, we are legitimate to use the a priori estimate
\eqref{apriori3} of  Theorem \ref{t:Apriori2}, thus getting
\begin{equation}\label{342}
\int_{B_{\rho}(x_0)}
 \frac{P_{\varepsilon,m}^2}{(1+P_{\varepsilon,m})^2}\,|Dv_{\varepsilon,m}|^{p-2}|D^2v_{\varepsilon,m}|^2\,dx\leq C
     \int_{B_{2R}(x_0)}\left(1+|Dv_{\varepsilon,m}|^p\right)\,\,dx,
\end{equation}
with  constant $\hat C=C\exp(C(\|k_\varepsilon\|_{L^{n}\log^\alpha L_{\loc}(\Omega)}+\|f_\varepsilon\|_{L^{n}\log^\alpha L_{\loc}(\Omega)})^\vartheta)$ independent of  $m$. As
usual, $P_{\varepsilon,m}=(|Dv_{\varepsilon,m}|-1)_+$. Now combining
estimates   \eqref{342} and \eqref{boun}, we obtain
\begin{equation}\label{weak1}
\int_{B_{\rho}(x_0)}|D(\mathcal{G}(P_{\varepsilon,m}
))|^2\,dx=\int_{B_{\rho}(x_0)}
\frac{P_{\varepsilon,m}^2}{(1+P_{\varepsilon,m})^2}\,|Dv_{\varepsilon,m}|^{p-2}|D^2v_{\varepsilon,m}|^2\,\dt
x\le C
\end{equation}
with a constant $C$ independent of  $m$ and of $\varepsilon$, and where $\mathcal{G}$ is the function defined in \eqref{funzioneg}.
Furthermore, by estimates \eqref{apr} and \eqref{boun},  we have that
$$
\|P_{\varepsilon,m}\|_{L^\infty(B_\rho)}\leq\sup_{B_\rho}|Dv_{\varepsilon,m}|\le
C,$$ with a constant $C$ independent of $m$ and $\varepsilon$. Thus, recalling that $v_{\varepsilon,m}$ weakly converges to $v_\varepsilon$ in $W^{1,p}(B_r)$, we are legitimate to apply
Proposition \ref{anto} to the sequence $(P_{\varepsilon,m})_m$ and  this yields
\begin{equation}\label{strong1}
|Dv_{\varepsilon,m}|\to |Dv_\varepsilon|\qquad \text{strongly in }\,\,
L^p(B_\rho\cap \{|Dv_\varepsilon|>1\}),
\end{equation}
and
\begin{equation}\label{strong2}
|Dv_{\varepsilon,m}|\to |Dv_\varepsilon| \qquad \text{a.e.
in}\,\,B_\rho\cap \{|Dv_\varepsilon|>1\},
\end{equation}
as $m\to\infty$.
If we now set $w_{\varepsilon,m}=\mathcal{G}(P_{\varepsilon,m})$,
then from \eqref{weak1} we deduce that (up to a subsequence) one has
$w_{\varepsilon,m}\to w_\varepsilon$ as $m\to \infty$ for some
$w_\varepsilon\in W^{1,2}(B_\rho )$, with weak convergence in
$W^{1,2}(B_\rho )$, strong convergence in $L^{2}(B_\rho)$, and a.e.
convergence in $B_\rho$. The latter, together with \eqref{strong2},
implies that $w_\varepsilon=\mathcal{G}((|Dv_\varepsilon|-1)_+)$
almost everywhere on $B_\rho\cap \{|Dv_\varepsilon|>1\}$. But then
the lower semicontinuity of the norm implies that
\begin{equation}\label{weak4}
\int_{B_{\rho}(x_0)}|D(\mathcal{G}((|Dv_{\varepsilon}|-1)_+))|^2\le
C,
\end{equation}
with a constant $C$ independent of $\varepsilon$. We now argue for
the sequence $v_\varepsilon$ as we did for $v_{\varepsilon,m}$, and
the theorem follows .
\end{proof}

\vspace{0.5cm}  \emph{Acknowledgements}.
A. Clop and F. Hatami were partially supported by project FP7-607647 of the European Comission. A. Clop is also supported by projects 2017-SGR-395 (Catalan Government) and MTM2016-75390 (Spanish Government).
R.Giova and A.Passarelli di
Napoli have been partially supported by the Gruppo Nazionale per
l'Analisi Matematica, la Probabilit\`{a} e le loro
 Applicazioni (GNAMPA) of the Istituto Nazionale di Alta Matematica
 (INdAM). \\
 \noindent R. Giova has been partially supported by
 Universit\`{a} degli Studi di Napoli Parthenope through the projects
\lq\lq sostegno alla Ricerca individuale\rq\rq (2015 - 2016 - 2017)
and ``Sostenibilit\`a, esternalit\`a e uso efficiente delle risorse
 ambientali''(2017-2019)

\medskip

\noindent {\bf A. Clop, F. Hatami}\\
Departament de Matem\`atiques,\\
Facultat de Ci\`encies, Campus de la U.A.B.\\
08193-Bellaterra (CATALONIA)\\
\noindent {\em E-mail address}: albertcp@mat.uab.cat,
fhatami@mat.uab.cat
\medskip

\noindent {\bf R. Giova}\\
\noindent Universit\`{a} degli Studi di Napoli ``Parthenope'' \\
Palazzo Pacanowsky - Via Generale Parisi, 13 \\
 80132 Napoli, Italy

\noindent {\em E-mail address}: raffaella.giova@uniparthenope.it

\medskip

\noindent
{\bf A. Passarelli di Napoli}\\
\noindent Universit\`{a} degli Studi di Napoli ``Federico II''\\
Dipartimento di Mat.~e Appl. ``R.~Caccioppoli''\\
 Via Cintia
  80126
Napoli, Italy\\
\noindent {\em E-mail address}: antpassa@unina.it
\medskip
\end{document}